\def\epsilon{\varepsilon}
\def\NN{\mathbb{N}}
\def\ZZ{\mathbb{Z}}
\def\QQ{\mathbb{Q}}
\def\RR{\mathbb{R}}
\def\diam{\operatorname{diam}}
\def\ua{\underline{a}}
\def\mid{\operatorname{mid}}
\def\HDim{\operatorname{HD}}
\def\Markov{\operatorname{M}}
\def\MarkovSpec{\mathbf{M}}
\def\Lagrange{\operatorname{L}}
\def\LagrangeSpec{\mathbf{L}}
\newtheorem{theorem}{Theorem}
\newtheorem{proposition}[theorem]{Proposition}
\newtheorem{lemma}[theorem]{Lemma}
\newtheorem{corollary}[theorem]{Corollary}
\newtheorem{remark}[theorem]{Remark}
\title{Approximations of the Lagrange and Markov spectra}
\author{Vincent Delecroix, Carlos Matheus and Carlos Gustavo Moreira}
\address{Vincent Delecroix:
Max-Planck Institut
Bonn, Germany
and
Universit\'e de Bordeaux CNRS (UMR 5800)
Bordeaux, France.
}
\email{vincent.delecroix@u-bordeaux.fr}
\address{Carlos Matheus:
 CMLS, \'Ecole Polytechnique, CNRS (UMR 7640), 91128, Palaiseau,
France.
}
\email{matheus.cmss@gmail.com}
\address{Carlos Gustavo Moreira:
School of Mathematical Sciences, Nankai University, Tianjin 300071, P. R. China, and IMPA, Estrada Dona Castorina 110, CEP 22460-320, Rio de Janeiro, Brazil
}
\email{gugu@impa.br}
\date{\today}
\begin{document}

\begin{abstract}
We describe a polynomial time algorithm providing finite sets arbitrarily close (in Hausdorff topology) to the Lagrange and Markov spectra.
\end{abstract}

\maketitle

\section{Introduction}
Given a positive real number $\alpha$ we define the \emph{best constant
of Diophantine approximation of $\alpha$} as
\[
\Lagrange(\alpha) := \limsup_{p,q \to  \infty} \frac{1}{| q (q \alpha - p)|}
\]
where $p$ and $q$ are bound to be positive integers.
The \emph{Lagrange spectrum} $\LagrangeSpec$ is the set $\{\Lagrange(\alpha): \alpha \in \RR \setminus \QQ\}$.
Perron~\cite{Per21} showed that if $\alpha = [a_0; a_1, \ldots]$
is the continued fraction expansion of $\alpha$ then
\[
\Lagrange(\alpha) = \limsup_{n \to \infty} (a_n + [0; a_{n-1}, a_{n-2}, \ldots, a_0] + [0; a_{n+1}, a_{n+2}, \ldots]).
\]
In other words, one can equivalently define the Lagrange spectrum
on the bi-infinite shift $\Sigma = \{1,2,3,\ldots\}^\ZZ$. More concretely, let us define the
\emph{height function} $\lambda_0: \Sigma \to \RR_+$ by
\[
\lambda_0(\ua) = a_0 + [0; a_{-1}, a_{-2}, \ldots] + [0; a_1, a_2, \ldots]
\]
where $\ua = (a_n)_{n \in \ZZ}$. We have
\[
\LagrangeSpec = \left\{\limsup_{n \to +\infty} \lambda_0 \left(\sigma^n(\ua)\right): \ua \in \Sigma \right\}.
\]
where $\sigma: \Sigma \to \Sigma$ is the shift map.
The \emph{Markov spectrum} can be defined similarly as
\[
\MarkovSpec = \left\{\Markov(\ua): \ua \in \Sigma \right\}
\]
where $\Markov(\ua) = \sup_{n \in \ZZ} \lambda_0(\sigma^n(\ua))$.

$\LagrangeSpec$ and $\MarkovSpec$ are subsets of $\RR^+$ which were first systematically studied by Markov~\cite{Ma1},~\cite{Ma2} circa 1879. It is known that the Lagrange, resp. Markov spectrum is the closure of the values of $M(\ua)$ for periodic, resp. ultimately periodic sequences $\ua$
(see \cite[Chapter 3]{CF}): in particular, $\LagrangeSpec$ and $\MarkovSpec$ are closed sets with $\LagrangeSpec \subset \MarkovSpec$. Nevertheless, they do not coincide~\cite{Fr68}. Actually, the second and third authors of the present text proved recently \cite{MatheusMoreira-bound} that the Hausdorff dimension $\HDim(\MarkovSpec \setminus \LagrangeSpec)$ of $\MarkovSpec \setminus \LagrangeSpec$ satisfies
\[
0.53128 < \HDim(\MarkovSpec \setminus \LagrangeSpec) < 0.986927.
\]
Also, it was shown by Freiman~\cite{Fr73b} and Schecker~\cite{Sch77} that $\LagrangeSpec$ contains the half-line $[\sqrt{21},+\infty)$. We recommend consulting Cusick--Flahive book~\cite{CF} for a detailed account of several features of these fascinating spectra describing also the cusp excursions of geodesics on the modular surface.

Our aim in this article is to approximate $\LagrangeSpec$ and $\MarkovSpec$ by mean of computations.
More precisely we will be doing so by constructing finite sets that are close in
Hausdorff distance. Let $X$ and $Y$ be set of real numbers. We say that $X$ and $Y$
are $\epsilon$-close (in Hausdorff distance) if
\[
\forall x \in X, \exists y \in Y, |x - y| < \epsilon
\quad \text{and} \quad
\forall y \in Y, \exists x \in X, |x - y| < \epsilon.
\]
\begin{theorem}
\label{thm:algo}
Let $R > 0$. Then there exists an
algorithm that given $Q$ provide finite sets $(1/Q)$-close respectively to
the Lagrange and Markov spectrum in $[0,R]$. There exists a constant
$0 < d_R < 1$ such that its running time is $O(Q^{3 d_R})$ with the
following upper bounds
\begin{itemize}
\item $d_R < 0.532$ for $R \leq \sqrt{13} \sim 3.606$,
\item $d_R < 0.706$ for $R \leq 2 \sqrt{5} \sim 4.472$,
\item $d_R < 0.789$ for $R \leq \sqrt{21} \sim 4.583$.
\end{itemize}
\end{theorem}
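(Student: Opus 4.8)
The plan is to exploit that $\lambda_0$ takes values $\le R$ only on sequences with bounded digits and that $\lambda_0(\ua)$ depends on $\ua$ only up to an exponentially small tail. First I would pass to a finite alphabet. If $\Markov(\ua)\le R$ then for every $n$ one has $a_n<\lambda_0(\sigma^n\ua)\le R$, so $a_n\le t:=\lfloor R\rfloor$; likewise $\Lagrange(\ua)\le R$ forces $a_n\le t$ for all large $|n|$, hence---after altering finitely many digits, which changes neither spectrum---everywhere. Thus both spectra inside $[0,R]$ are carried by the subshift $\Sigma_R\subset\{1,\dots,t\}^{\ZZ}$ of \emph{admissible} sequences, those with $\lambda_0(\sigma^n\ua)\le R$ for all $n$.

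Second, the truncation estimate. Two continued fractions sharing a one-sided word $w=(a_1,\dots,a_k)$ lie in a common cylinder $I(w)$ of length comparable to $q_k(w)^{-2}$, where $q_k$ is the $k$-th continuant; as $q_k\ge\phi^{\,k-1}$ with $\phi=(1+\sqrt5)/2$, cutting each tail at the first depth $k(w)=O(\log Q)$ with $|I(w)|<1/(2Q)$ determines each of $[0;a_{-1},a_{-2},\dots]$ and $[0;a_1,a_2,\dots]$, and therefore $\lambda_0$, to within $1/Q$. A \emph{window} is then a triple $(w^-,a_0,w^+)$ consisting of a central digit and two such stopped admissible words.

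Third, I would count and enumerate the admissible windows. By bounded distortion of the Gauss map, the admissible one-sided words stopped at scale $1/Q$ are precisely a cover of the Gauss--Cantor set $K(R)\subset[0,1]$ of admissible tails by cylinders of size comparable to $1/Q$, so there are $O(Q^{d_R})$ of them on each side, where $d_R:=\HDim(K(R))$; for $R\le\sqrt{13}$ one has $K(R)\subseteq K(\{1,2\})$, the classical set of continued fractions with partial quotients in $\{1,2\}$ (dimension $\approx 0.53128<0.532$). One generates these by a depth-first descent of a tree with $O(Q^{d_R})$ leaves, forms the $O(Q^{2d_R})$ candidate windows, and retains, for the Markov set, those in which the central position dominates every other position of the window, and, for the Lagrange set, those whose central pattern recurs---read off from the strongly connected components of the transition graph on stopped words. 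Outputting the truncated central values $a_0+[0;a_{-1},\dots]+[0;a_1,\dots]$ gives the two finite sets. Closeness holds in both directions: every admissible window extends to a genuine point of $\Sigma_R$, so no spurious value is emitted, while the stopped window of any spectral point is itself enumerated and reproduces that point within $1/Q$.

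The principal obstacle is twofold. The combinatorial backbone is deciding admissibility and the domination/recurrence conditions: comparing $\lambda_0(\sigma^n\ua)$ across shifts reduces, through the Markov--Perron alternating-lexicographic rules for continued fractions, to finitely many comparisons on $w^\pm$, and one must verify that these are already decided inside the stopped window. The quantitative heart is converting the cylinder count into $d_R$ and, above all, proving the explicit bounds $d_R<0.532,\,0.706,\,0.789$; this rests on transfer-operator (thermodynamic) estimates for $K(R)$ at the thresholds $R=\sqrt{13},\,2\sqrt5,\,\sqrt{21}$, where the set of admissible configurations enlarges. Finally, the algorithm performs $O(Q^{2d_R})$ window operations on integers of $O(\log Q)$ bits; for $R$ fixed (so $d_R$ a fixed positive constant) the resulting $\mathrm{poly}(\log Q)$ arithmetic and sorting overhead is, as $Q\to\infty$, dominated by a single extra factor $Q^{d_R}$, which yields the stated running time $O(Q^{3d_R})$.
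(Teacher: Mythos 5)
Your setup matches the paper's: restriction to a finite alphabet, cylinders stopped at the first depth where their diameter drops below the scale $1/Q$, and the count $O(Q^{d_R})$ of stopped words (hence $O(Q^{2d_R})$ windows) via bounded distortion of the Gauss map and cited dimension values for $E_K$ --- this is exactly the paper's $C_{K,Q}$ and its Theorem~\ref{thm:control:growth:CKQ}, proved by the Palis--Takens argument. The genuine gap is in your emission criteria and the claim that they produce no spurious values. For the Markov set you keep windows whose center dominates the other positions \emph{of the window}, and justify this by ``every admissible window extends to a genuine point of $\Sigma_R$''. That inference does not work: the extension's Markov value can exceed the center value by a definite amount (positions near the window boundary have essentially undetermined tails, and every extension may be forced to create a larger value just outside the window), and since the spectra have gaps, a center value sitting inside a gap would then be $1/Q$-close to nothing. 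You yourself flag that one ``must verify that these are already decided inside the stopped window'', but this is not a verification to be filled in later --- it is false in general, and it is precisely the point where a local condition must be replaced by a global one. The failure is starkest for your Lagrange criterion: ``the central pattern recurs, read off from the strongly connected components of the transition graph'' is an \emph{unweighted} recurrence condition, so it cannot distinguish Lagrange from Markov values. Since $\LagrangeSpec$ is closed and $\MarkovSpec\setminus\LagrangeSpec\neq\emptyset$ (Freiman; realizable with partial quotients in $\{1,2\}$, hence admissible for every $R$ in your stated range), any such point lies at some positive distance $\delta$ from $\LagrangeSpec$, yet the window centered at the maximum of the corresponding sequence has a central pattern that does recur in admissible sequences --- every recurrence just forces larger values in between, which the unweighted SCC test cannot see. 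So for $1/Q<\delta$ your Lagrange output is wrong. The paper's solution is to make both conditions global \emph{and weighted}: an edge (window) is Lagrange if it lies on a cycle along which its weight is maximal, and Markov if it is weight-maximal along a configuration cycle--path--cycle; the cycles then furnish periodic (resp.\ eventually periodic) sequences realizing the emitted value within $1/Q$ (Theorem~\ref{thm:lagrange:markov:edges:approximate}), which is exactly the direction your argument is missing.

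The running time is not established either. The exponent $3d_R$ in the paper is not polylogarithmic overhead absorbed into a spare factor $Q^{d_R}$: deciding the weighted cycle conditions is the expensive step. Done naively --- for each edge $e$, a depth-first search over the subgraph of edges of weight at most $w(e)$ --- it costs $O(m)$ per edge, hence $O(m^2)=O(Q^{4d_R})$ in total (the paper's Theorem~\ref{thm:testing_one_edge} and the remark after it). The bound $O(m^{3/2})=O(Q^{3d_R})$ is obtained only by inserting edges in increasing order of weight and maintaining strongly connected components incrementally, via the incremental cycle-detection/SCC-maintenance algorithms of Haeupler et al.\ and Bender--Fineman--Gilbert--Tarjan: an edge is Lagrange precisely when its insertion creates a cycle, and Markov precisely when, at insertion time, its source and target can reach nontrivial strongly connected components backward and forward. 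Without this ingredient (or an equivalent one), neither the claimed complexity nor, given the issues above, the correctness of your procedure follows.
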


The numbers $d_R$ above are actually
Hausdorff dimensions of the sets $E_K$, $2\leq K\leq 4$, of real numbers whose continued fraction
expansions only contain the digits from $1$ to $K$.

\begin{remark}
As we mentioned already, $\LagrangeSpec$ contains the half line $[\sqrt{21},+\infty)$
and it does not make any sense to consider values of $R$ larger than $\sqrt{21}$.
\end{remark}

Among our several motivations to get Theorem \ref{thm:algo}, we would like to mention that an efficient algorithm producing high resolution drawings of $\LagrangeSpec$ and $\MarkovSpec$ could potentially lead the way to solve some long-standing questions such as Berstein\footnote{In fact, despite the fact that some heuristic arguments from the projection theory of fractal sets are compatible with the presence of intervals in $\LagrangeSpec$ before the so-called Hall's ray, Berstein conjecture is somewhat surprising to us because it propose a concrete \emph{relatively large} interval before Hall's ray.} conjecture~\cite{Be73} that $[4.1,4.52]\subset \LagrangeSpec$. As it turns out, the algorithm provided by Theorem \ref{thm:algo} is not sufficiently powerful yet to put us in good position to attack any of these questions. We hope to pursue this discussion in future works.

Let us now describe the main ideas of the algorithm in Theorem \ref{thm:algo}. Because we consider Lagrange
and Markov values in the interval $[0,R]$ we can restrict to continued fractions
with partial quotients $\{1,2, \ldots, \lfloor R \rfloor\}$. Let
$\Sigma_K := \{1,\ldots,K\}^\ZZ$. We denote by $\LagrangeSpec_K$ and
$\MarkovSpec_K$ respectively the Lagrange and
Markov spectra restricted to the shift $\Sigma_K$, that is
$\LagrangeSpec_K = \left\{\Lagrange(\ua): \ua \in \Sigma_K \right\}$
and $\MarkovSpec_K = \left\{\Markov(\ua): \ua \in \Sigma_K \right\}$.
For each $K$, the relation $\LagrangeSpec_K \subset \MarkovSpec_K$ holds.

The first step of the algorithm consists in constructing a subshift
of finite type $\Sigma_{K,Q}$ on the alphabet $\{1,\ldots,K\}$ depending
on the quality of approximation $Q$. $\Sigma_{K,Q}$ is the set of infinite paths
on a graph $G_{K,Q}$ and each edge of $G_{K,Q}$ corresponds
to a certain cylinder set $\vec{b} = (b_{-m}, \ldots, b_0, \ldots, b_n)$ of the
shift $\Sigma_{K,Q}$ satisfying
\begin{equation}
\label{eq:control}
\sup_{\ua \in \vec{b}} \lambda_0(\ua) - \inf_{\ua \in \vec{b}} \lambda_0(\ua) < \frac{1}{Q}.
\end{equation}
In the above equation and all along the text we identify a finite pointed
word $\vec{b}$ (ie with a distinguished origin $b_0$) and there associated cylinder
set $\{\ua \in \Sigma_K:\, \forall k \in \{-m, \ldots, n\}, a_{k} = b_k\}$.

We turn the graph $G_{K,Q}$ into a weighted graph by considering
on the edge associated to $\vec{b}$ the weight
$\displaystyle \frac{\inf_{\ua \in \vec{b}} \lambda_0(\ua) + \sup_{\ua \in \vec{b}} \lambda_0(\ua))}{2}$.
The weighted graph $G_{K,Q}$ provide an approximation of the shift $\Sigma_K$ together
with the height function $\lambda_0$. The condition~\eqref{eq:control} gives upper bound
on the quality of approximation. The rest of the algorithm consists in studying directly
on the graph $G_{K,Q}$ the discrete analogue of Lagrange and Markov spectrum. The
overall complexity of the algorithm is governed by the size of $G_{K,Q}$. It is not
so suprising that this is related to the Hausdorff dimension of the Lagrange
and Markov spectra.

The algorithm developed in this article has been implemented by the first
author using the computer algebra software SageMath~\cite{SageMath}. Part of the code
has also been optimized using the Python-to-C compiler Cython~\cite{Cython} and the
figures were generated with Matplotlib (see \cite{Matplotlib}). The code
is publicly available at
\url{https://plmlab.math.cnrs.fr/delecroix/lagrange}.

Let us describe the organization of the article. In Section~\ref{sec:background},
we compute some auxiliary intervals containing the sets
$\LagrangeSpec_K$ for $1\leq K\leq 4$. In Section~\ref{sec:graphs}, we describe
the weighted directed graphs $G_{K,Q}$ and their basic properties. After that,
Section~\ref{sec:lagrange-markov-in-graph} provides the discrete analogue
of Lagrange and Markov spectra on a weighted directed graph.
In Section~\ref{sec:justification}, we justify that the discrete spectrum
provide $(1/Q)$-approximation of the original Lagrange and Markov spectra.
The main ingredient for the running time complexity is contained in
Theorem~\ref{thm:control:growth:CKQ} from Section~\ref{sec:complexity}. Finally, for the sake
of comparison with Theorem~\ref{thm:algo}, we consider in Section~\ref{sec:bad-algo}
the alternative idea of approaching the Lagrange
spectrum via periodic orbits in $\{1,\dots, K\}^{\mathbb{Z}}$. Unfortunately
our complexity bounds for the resulting algorithm are very poor: roughly
speaking, we are currently obliged to perform calculations with $4^{Q^4}$
periodic orbits in order to rigorously ensure that we got a $(1/Q)$-dense
subset of $\LagrangeSpec$!

The final Section~\ref{sec:final-pictures} contains some rigurous pictures
obtained via the algorithm from Theorem~\ref{thm:algo}.

\section{Preliminary bounds on $\LagrangeSpec_K$}
\label{sec:background}

In this entire article, $K$ is an integer $1\leq K\leq 4$ except when it is explicitly stated otherwise. Recall that $\LagrangeSpec_K$ is the set of Lagrange values $\Lagrange(\ua)$ when
$\ua$ is restricted to the shift $\Sigma_K = \{1,\ldots,K\}^\ZZ$. It is easy to see that
the smallest and largest values of $\LagrangeSpec_K \setminus \LagrangeSpec_{K-1}$ are
respectively $\Lagrange(\overline{K})$ and $\Lagrange(\overline{1,K})$ where
for a finite word $u$ we denote $\overline{u}$ the associated periodic biinfinite word.

\begin{figure}[!ht]
\begin{center}\includegraphics{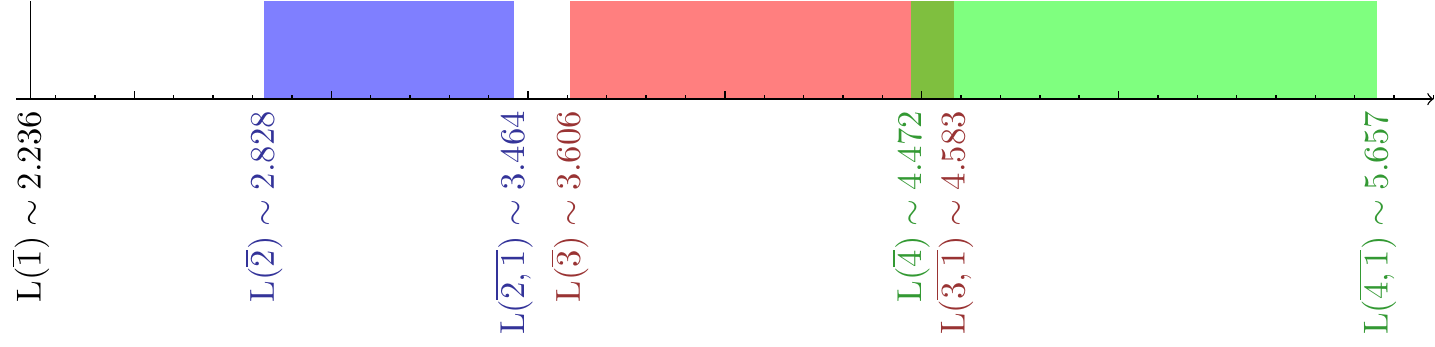}\end{center}
\caption{The lower and upper bounds for $\LagrangeSpec_K \setminus \LagrangeSpec_{K-1}$ when $1\le K\le 4$.}
\label{fig:Lagrange:bounds}
\end{figure}

The points
\begin{align*}
[0; \overline{1}] &= \frac{\sqrt{5}-1}{2}
 & [0; \overline{2}] &= \sqrt{2} - 1
 & [0; \overline{3}] &= \frac{\sqrt{13} - 3}{2}
 &  [0; \overline{4}] &= \sqrt{5} - 2 \\
  & \simeq 0.6180
 && \simeq 0.4142
 && \simeq 0.3028
 && \simeq 0.2361 \\
\end{align*}
and
\begin{align*}
[0; \overline{2,1}] &= \frac{\sqrt{3}-1}{2}
 & [0; \overline{3,1}] &= \frac{\sqrt{21}-3}{6}
 & [0; \overline{4,1}] &= \frac{\sqrt{2}-1}{2} \\
  & \simeq 0.3660
 && \simeq 0.2638
 && \simeq 0.2071 \\
[0; \overline{1,2}] &= \sqrt{3} - 1
 &[0; \overline{1,3}] &= \frac{\sqrt{21}-3}{2}
 &[0; \overline{1,4}] &= 2\sqrt{2}-2 \\
 & \simeq 0.7321
 && \simeq 0.7913
 && \simeq 0.8284
\end{align*}
allow to determine the intervals containing $\LagrangeSpec_K \setminus \LagrangeSpec_{K-1}$ for $1\le K\le 4$, see Figure~\ref{fig:Lagrange:bounds}.

The ranges of $R$ for which we provide upper bounds on $d_R$ in Theorem~\ref{thm:algo}
are visible on Figure~\ref{fig:Lagrange:bounds}
\begin{itemize}
\item $\Lagrange(\overline{2,1}) = \sqrt{13}$ is the maximum of $\LagrangeSpec_2$
\item $\Lagrange({\overline{4}}) = 2 \sqrt{5}$ is the maximum of $\LagrangeSpec_4 \setminus \LagrangeSpec_3$.
\end{itemize}
Let us also mention that $\Lagrange(\overline{3,1}) = \sqrt{21}$ is the maximum of $\LagrangeSpec_3$.

\section{Shifts of finite type}
\label{sec:graphs}
In this section we construct the graphs $G_{K,Q}$ and their associated shifts of finite
type $\Sigma_{K,Q}$. The construction requires intermediate graphs
$G^+_{K,Q}$ that are associated to the one-sided shift
$\Sigma^+_K := \{1,\ldots,K\}^\NN$.

\subsection{$(1/Q)$-cylinders for the Gauss map}
\label{sec:one:sided}
In this section we consider approximations of $\Sigma^+_K$ by shift of
finite type. The continued fraction embedds the shift $\Sigma^+_K$ into $\RR$ as
\[
\begin{array}{ccc}
\Sigma^+_K & \to & \RR_+ \\
\ua = (a_n)_{n \geq 1} & \mapsto & \left[0; a_1, a_2, \ldots\right]
\end{array}
\]
The image of $\Sigma^+_K$ under this map is a Cantor set $E_K$.

For a finite (one sided) cylinder $\vec{b} = (b_1, \ldots, b_n)$ for the Gauss map
we use the notation
\[
\frac{p_k(\vec{b})}{q_k(\vec{b})} = \cfrac{1}{b_1 + \cfrac{1}{b_2 + \ldots + \cfrac{1}{b_k}}}.
\]
A cylinder $\vec{b} \in \{1,\ldots,K\}^n$ projects on $[0,1]$ on
a subset $I_K(\vec{b}) \cap E_K$ where $I_K(\vec{b})$ is the interval with extremities
\[
\frac{p_n(\vec{b}) + \alpha^+_K\ p_{n-1}(\vec{b})}{q_n(\vec{b}) + \alpha^+_K\ q_{n-1}(\vec{b})}
\qquad \text{and} \qquad
\frac{p_n(\vec{b}) + \alpha^-_K\ p_{n-1}(\vec{b})}{q_n(\vec{b}) + \alpha^-_K\ q_{n-1}(\vec{b})}.
\]
where $\alpha^-_K = [0; \overline{K, 1}]$ and $\alpha^+_K = [0; \overline{1,K}]$.
The values of $\alpha^-_K$ and $\alpha^+_K$ for $K \in \{1,2,3,4\}$ were
computed  in Section~\ref{sec:background}. Note that depending on the parity of the length
$|\vec{b}|$ of $\vec{b}$ one or the other is the left handside of the interval.
The diameter of $I_K(\vec{b})$ is
\begin{equation}
\label{eq:diam_cylinder}
\diam_K(\vec{b}) := \frac{(\alpha^+_K - \alpha^-_K)}{(q_n(\vec{b}) + \alpha^+_K\ q_{n-1}(\vec{b})) (q_n(\vec{b}) + \alpha^-_K\ q_{n-1}(\vec{b}))}.
\end{equation}
Note that it is slightly smaller than the \emph{size} $\diam(\vec{b}) = \frac{1}{q_n (q_n
+ q_{n-1})}$ when we do not restrict to the subshift $\{1,\ldots,K\}^\NN$.
Now given $Q$ we consider the following set of cylinders
\begin{equation}
\label{eq:CKQ}
C_{K, Q} := \left\{\vec{b} \in \{1,\ldots,K\}^+:\ \diam_K(\vec{b}) \leq \frac{1}{Q} < \diam_K(\vec{b}') \right\}
\end{equation}
where $\vec{b}'$ denotes the prefix of length $|\vec{b}|-1$ of $b$ and
$\diam_K$ is defined in~\eqref{eq:diam_cylinder}.

First, notice that if $\vec{b} \in C_{K,Q}$ then no proper prefix of
$\vec{b}$ belongs to $C_{K,Q}$ and that any proper suffix of $\vec{b}$
is a prefix of some element in $C_{K,Q}$. The set $C_{K,Q}$ can naturally
be thought as the leaves of a tree rooted at the empty word
$\epsilon$ and where the edges correspond to adding a letter to the right.
Namely, consider the set $\overline{C}_{K,Q}$ to
be the union of $C_{K,Q}$ and all prefixes of elements
of $C_{K,Q}$. The tree has vertex set $\overline{C}_{K,Q}$ and
we put an oriented edge from $u$ to $v$ if $u$ is the prefix of length
$|v|-1$ of $v$.

We add edges on this tree corresponding to the so called
\emph{suffix links}. For each $w \in C_{K,Q}$ we add an
edge from $w$ to its suffix of length $|w|-1$. As we already mentioned,
this suffix belongs necessarily to $\overline{C}_{K,Q}$.
One can visualize the tree and the suffix
links of $C_{2,20}$ on Figure~\ref{fig:tree_and_graph}.

\begin{figure}[!ht]
\begin{center}%
\includegraphics{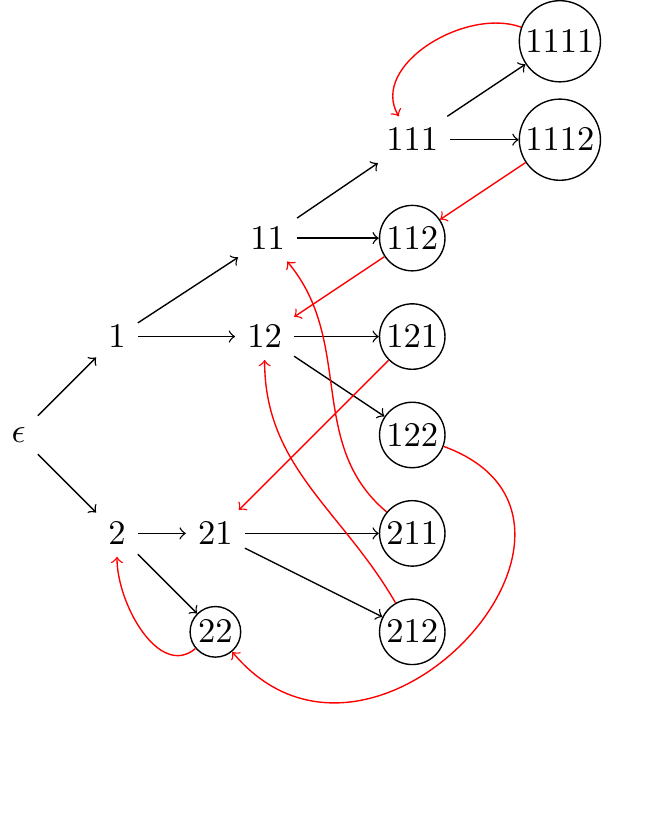}%
\hspace{0.5cm}%
\includegraphics{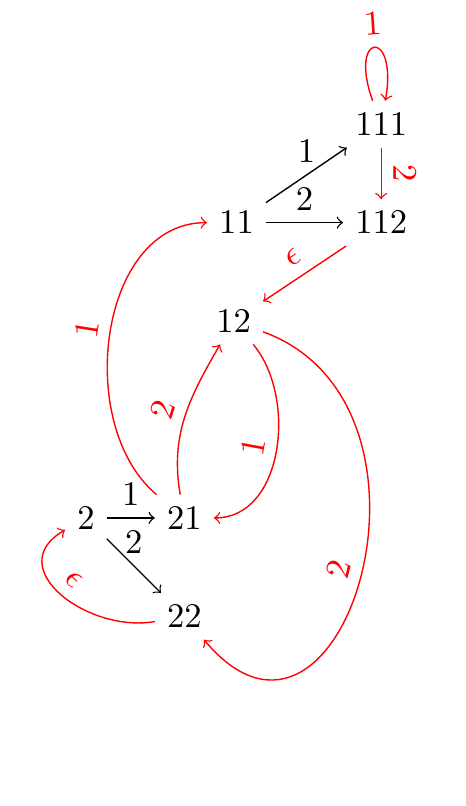}%
\end{center}
\caption{The tree of $T_{2,20}$ and the suffix links (in red) on the left.
The graph $G^+_{2,20}$ on the right with the prolongation edges in black
and the shift edges in red.}
\label{fig:tree_and_graph}
\end{figure}

Now we define the set $V^+_{K,Q}$ as the set of endpoints of the suffix links
(in other words the maximal non-trivial suffixes of elements of $C_{K,Q}$).
We consider two kinds of edges on the vertex set $V^+_{K,Q}$.
First, for each (oriented) path in the tree $T_{K,Q}$ between
pairs of vertices $V^+_{K,Q}$ we add an edge in $G^+_{K,Q}$. We call such
edge a \emph{prolongation edge} (they are in black on
Figure~\ref{fig:tree_and_graph}). Secondly, for each suffix link $s$ from
$u$ to $v$ in $T_{K,Q}$, there is a unique vertex $w$ in $V^+_{K,Q}$ and a path
from $w$ to $u$ that avoids any other element from $V^+_{K,Q}$. We add an edge
from $w$ to $v$ that we call a \emph{shift edge}.

Each edge carries a label that is a finite word on $\{1,\ldots,K\}$ (possibly
empty). They are directly induced from the tree $T_{K,Q}$ for which each edge
carry a letter. Each prolongation edge in $G^+_{K,Q}$ already carries a letter
and we keep this letter as a label. Each shift edge is made of the
concatenation of a path $w \to u$ and a suffix link and we associate to
this edge the label of the path $w \to u$ in the tree $T_{K,Q}$.

\begin{lemma}
For any $K$ and $Q$ the graph $G^+_{K,Q}$ recognize the shift $\{1,\ldots,K\}^\ZZ$:
for any biinfinite word $w \in \Sigma_K$ there exists a unique biinfinite path
$\gamma$ in $G^+_{K,Q}$ so that $w$ can be read along $\gamma$.
\end{lemma}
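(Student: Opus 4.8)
The plan is to realize $G^+_{K,Q}$ as the (compressed) Aho--Corasick automaton of the prefix-free set $C_{K,Q}$ and to exploit the fact that, since every cylinder of large enough length has diameter $\le 1/Q$, the tree $\overline{C}_{K,Q}$ has bounded depth $N := \max_{\vec{b} \in C_{K,Q}} |\vec{b}|$. The central object is the \emph{state function}: for a finite word $u$, let $\Phi(u)$ be the longest suffix of $u$ that is a vertex of the tree $T_{K,Q}$ (equivalently, a prefix of some element of $C_{K,Q}$). Well-definedness is immediate, since every single letter begins some element of $C_{K,Q}$, so the root has all $K$ children and no reading can be stuck; moreover, by the property already noted --- every proper suffix of an element of $C_{K,Q}$ lies in $\overline{C}_{K,Q}$ --- the suffix links simply delete the leading letter, and $\Phi(u)$ depends only on the last $N$ letters of $u$ because every node of $T_{K,Q}$ has length $\le N$.

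First I would establish the transition rule by induction on $|u|$: reading a letter $c$ from the state $t = \Phi(u)$, one either prolongs (when $tc \in \overline{C}_{K,Q}$) or follows suffix links until prolongation is possible, and this dynamics is exactly what the prolongation and shift edges of $G^+_{K,Q}$ encode, as they compress the maximal tree descents between consecutive elements of $V^+_{K,Q}$. Concretely, $\Phi(uc)$ is reached from $\Phi(u)$ by traversing a single edge of $G^+_{K,Q}$ precisely at the moments when the passage visits an element of $V^+_{K,Q}$; verifying that the labels concatenate to the letters actually read, and that the shift edge issued at a leaf $w \in C_{K,Q}$ lands on the suffix prescribed by the suffix link, is the bookkeeping that makes the automaton deterministic and complete.

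For existence, given $\ua = (a_n)_{n \in \ZZ} \in \Sigma_K$ I set $s_n := \Phi(\ldots a_{n-1} a_n)$, which is well defined since it only depends on $a_{n-N+1}, \ldots, a_n$. The transition rule supplies, for each $n$, a segment of the letter-level automaton from $s_n$ to $s_{n+1}$ reading $a_{n+1}$. The key finite-memory observation is that within any block of $N+1$ consecutive indices the descent in $T_{K,Q}$ must be interrupted by a suffix link, for otherwise the depth of the state would exceed $N$; hence an element of $V^+_{K,Q}$ is visited with gaps at most $N$ in both directions. Grouping the letter-by-letter transitions between consecutive $V^+_{K,Q}$-visits then yields exactly one prolongation or shift edge per group, and concatenating these produces a bi-infinite path $\gamma$ in $G^+_{K,Q}$ along which $\ua$ is read.

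For uniqueness, I would argue that the vertex of $\gamma$ sitting at each $V^+_{K,Q}$-visit is forced: reading forward, the automaton is deterministic, so the state at position $n$ is determined by any earlier state, and because $\Phi(\ldots a_{n-1} a_n)$ depends only on the window $a_{n-N+1} \ldots a_n$ this state is independent of how far back one starts --- hence every bi-infinite path reading $\ua$ must pass through the same sequence $(s_n)$ and therefore through the same sequence of edges. The step I expect to be most delicate is the bookkeeping of the second paragraph: matching the letter-level suffix-link dynamics with the compressed edges of $G^+_{K,Q}$, and checking that no descent silently skips a vertex of $V^+_{K,Q}$, so that the grouping into single edges is simultaneously always possible and unique.
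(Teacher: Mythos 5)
Your proposal is correct in substance, but there is nothing in the paper to compare it against: the paper states this lemma bare, with no proof at all (it is immediately followed by the remark on de Bruijn graphs), so your argument supplies what the paper omits. Your reading of the construction is also the right one, and it matters: prolongation edges must be understood as tree descents between elements of $V^+_{K,Q}$ that avoid intermediate elements of $V^+_{K,Q}$, and shift edges as descents to a leaf (avoiding $V^+_{K,Q}$) followed by the suffix link; under the paper's literal wording (``for each oriented path between pairs of vertices'') uniqueness would fail, since a path could silently skip an intermediate vertex --- exactly the point you flag as delicate. Your state function $\Phi$, the identification of suffix links with deletion of the leading letter (valid because every proper suffix of an element of $C_{K,Q}$ lies in $\overline{C}_{K,Q}$), and the bounded-gap argument (depth at most $N$ forces a suffix link within every $N+1$ steps, and every suffix link lands in $V^+_{K,Q}$) are all correct and yield existence.

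The one step you should strengthen is uniqueness. Determinism plus ``$\Phi$ depends only on the last $N$ letters'' does not literally suffice: an arbitrary biinfinite path $\gamma$ reading $\ua$ has letter-level states $t_n$ that are not a priori of the form $\Phi(\cdots a_{n-1}a_n)$, so the finite-memory property of $\Phi$ says nothing about them yet. What is needed is a synchronization statement: for \emph{every} tree vertex $t$ (not only those arising as $\Phi(u)$ along the canonical run) and every letter $c$, the transition rule sends $t$ to $\Phi(tc)$, together with the cocycle identity $\Phi(\Phi(u)c)=\Phi(uc)$. Both follow from the observation that prolongation succeeds exactly at non-leaves: if $t\in\overline{C}_{K,Q}\setminus C_{K,Q}$ then $tc\in\overline{C}_{K,Q}$ for every letter $c$, while children of leaves are never vertices; hence the suffix of $t$ reached by following suffix links is the longest non-leaf suffix of $t$. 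Granting this, since all states have length at most $N$, one gets $t_n=\Phi\left(t_{n-N}\,a_{n-N+1}\cdots a_n\right)=\Phi\left(a_{n-N+1}\cdots a_n\right)=s_n$ for every $n$, which forces the sequence of $V^+_{K,Q}$-visits and hence the sequence of edges. This is a short extension of the induction you already set up in your second paragraph, but as written the uniqueness paragraph does rely on it implicitly. A last, minor point: you implicitly assume $Q$ large enough that every single letter is a vertex of the tree, i.e.\ $1/Q<\alpha^+_K-\alpha^-_K$; for smaller $Q$ the construction degenerates and the statement is vacuous.
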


\begin{remark}
The construction of the graphs $G^+_{K,Q}$ from $C_{K,Q}$ can be generalized
to any set of words with the same properties (every word in $\Sigma_K^+$ has
a prefix in the set and no proper prefix of an element of the set is
contained in the set). If we had used instead of $C_{K,Q}$ the set of words
of given combinatorial length we would have obtain the de Bruijn graph \cite{deBruijn}.
\end{remark}

\subsection{From $G^+_{K,Q}$ to $G_{K,Q}$}
Now that we have the graph $G^+_{K,Q} = (V^+_{K,Q}, E^+_{K,Q})$ at hand we explain
the construction of $G_{K,Q}$.
Similarly to $G^+_{K,Q}$, the graph $G_{K,Q}$ has two kinds of
edges: prolongation edges and shift edges. The shift edges are in
bijection with $C_{K,Q} \times \{1,\ldots,K\} \times C_{K,Q}$.
Let $(p,a_0,s) \in C_{K,Q} \times \{1,\ldots,K\} \times C_{K,Q}$.
Let $u$ and $v$ be the source and target of the shift edge
associated to $s$ in $G^+_{K,Q}$. By construction, if
$s = (s_1, \ldots, s_n)$ then $u = (s_1, \ldots, s_k)$ and
$v = (s_2, \ldots, s_n)$ for some $0 \leq k \leq n$.
The source of the edge corresponding to $(p,a_0,s)$ in $G_{K,Q}$
is $(p,a_0,s)$ and its target is $(p', s_1, v)$ where $p'$
is the unique element in $C_{K,Q}$ that is a prefix of
$a p$.

We now describe prolongation edges. To each prolongation edge
in $G^+_{K,Q}$ from $u = (s_1, \ldots, s_k)$ to $v = (s_1, \ldots, s_{k+1})$
we associate for each $p$ in $C_{K,Q}$ and each $a_0$ in $\{1,\ldots,K\}$ a prolongation
edge from $(p,a_0,u)$ to $(p,a_0,v)$.

For the shift edge corresponding to $(p,a_0,s)$ we associate the weight
\begin{equation}
\label{eq:GKQ:weights}
F((p,a_0,s)) = a_0 + \mid_K(p) + \mid_K(s)
\end{equation}
where $\mid_K(\vec{b})$ is the middle of the interval determined by a cylinder
$\vec{b} = (b_1, \ldots, b_n)$ given by
\begin{equation}
\label{eq:mid_cylinder}
\mid_K(\vec{b}) = \frac{1}{2} \left(
\frac{p_n(\vec{b}) + \alpha^+_K\ p_{n-1}(\vec{b})}{q_n(\vec{b}) + \alpha^+_K\ q_{n-1}(\vec{b})}
+
\frac{p_n(\vec{b}) + \alpha^-_K\ p_{n-1}(\vec{b})}{q_n(\vec{b}) + \alpha^-_K\ q_{n-1}(\vec{b})}.
\right).
\end{equation}
We give weight $0$ to each prolongation edge.

As before, the biinfinite paths on edges of $G_{K,Q}$ define
a subshift of finite type.
\begin{lemma}
For each $Q$, the graph $G_{K,Q}$ recognizes the subshift $\Sigma_K$.
Moreover, for any shift edge in $G_{K,Q}$ associated to the cylinder $(p,a_0,s)
\in C_{K,Q} \times \{1,\ldots,K\} \times C_{K,Q}$ we have
\[
\sup_{\ua \in (p,a_0,s)} | \lambda_0(\ua) - F((p,a,s)) | \leq \frac{1}{Q}.
\]
\end{lemma}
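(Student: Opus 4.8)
The statement has a combinatorial half (that $G_{K,Q}$ recognizes $\Sigma_K$) and a quantitative half (the $1/Q$ bound). The plan is to deduce the first from the recognition property of $G^+_{K,Q}$ proved in the previous lemma, and to establish the second by a direct midpoint estimate.

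For the recognition half, I would first record that a vertex of $G_{K,Q}$ is a triple $(p, a_0, u)$ with $p \in C_{K,Q}$, $a_0 \in \{1, \ldots, K\}$ and $u \in V^+_{K,Q}$, and that by construction the third coordinate moves exactly as in $G^+_{K,Q}$ (prolongation edges advance $u$ along $T_{K,Q}$ while $(p,a_0)$ is frozen, and a shift edge fires when the forward word completes a cylinder $s \in C_{K,Q}$). Fixing $\ua = (a_n)_{n \in \ZZ} \in \Sigma_K$, the previous lemma supplies a unique biinfinite path $\gamma^+$ in $G^+_{K,Q}$ along which $\ua$ is read. I would then lift $\gamma^+$ to $G_{K,Q}$ by attaching to each shift state the central letter $a_0 = a_i$ together with the unique prefix $p \in C_{K,Q}$ of the backward tail $(a_{i-1}, a_{i-2}, \ldots)$, and check that this lift is a genuine path of $G_{K,Q}$: under one shift the backward tail becomes $(a_i, a_{i-1}, \ldots)$, so its $C_{K,Q}$-prefix becomes the $C_{K,Q}$-prefix $p'$ of $a_i p$, which is precisely the update that the shift edges of $G_{K,Q}$ prescribe. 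Since the extra data $(p, a_0)$ are forced by $\ua$, the lift is unique, and conversely any biinfinite path reading $\ua$ projects to $\gamma^+$ and carries this same forced data; this yields existence and uniqueness.

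The one genuinely non-formal point in this step is that $p'$ is well defined, i.e. that $a_i p$ already contains a cylinder of $C_{K,Q}$ as a prefix. I would argue this by noting that $I_K(a_0 \vec{b})$ is the image of $I_K(\vec{b})$ under $x \mapsto 1/(a_0 + x)$, a map whose derivative has absolute value $1/(a_0+x)^2 \leq 1$ for $a_0 \geq 1$; hence prepending a digit never increases the diameter and $\diam_K(a_i p) \leq \diam_K(p) \leq 1/Q$. Consequently the (unique, because $C_{K,Q}$ is prefix-free) $C_{K,Q}$-prefix of any word extending $a_i p$ has length at most $|a_i p|$ and is therefore a prefix of $a_i p$.

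The quantitative half is then routine. Fixing a shift edge labelled $(p, a_0, s)$ and $\ua \in (p, a_0, s)$, the cylinder convention gives that $a_0$ is the central letter, $[0; a_1, a_2, \ldots] \in I_K(s)$ and $[0; a_{-1}, a_{-2}, \ldots] \in I_K(p)$, so that
\[
\lambda_0(\ua) - F((p,a_0,s)) = \bigl([0; a_1, a_2, \ldots] - \mid_K(s)\bigr) + \bigl([0; a_{-1}, a_{-2}, \ldots] - \mid_K(p)\bigr).
\]
Since $\mid_K(s)$ and $\mid_K(p)$ are the midpoints of $I_K(s)$ and $I_K(p)$, each summand is at most half of the corresponding diameter in absolute value, whence, using $\diam_K(s), \diam_K(p) \leq 1/Q$,
\[
\bigl|\lambda_0(\ua) - F((p,a_0,s))\bigr| \leq \tfrac{1}{2}\diam_K(s) + \tfrac{1}{2}\diam_K(p) \leq \tfrac{1}{Q},
\]
and taking the supremum over the cylinder finishes the proof. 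I expect the main obstacle to be the recognition bookkeeping — especially matching the shift-edge update against the shift dynamics and checking that $p'$ is well defined — whereas the final estimate is an immediate midpoint bound.
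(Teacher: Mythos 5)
Your proof is correct, and its quantitative half coincides exactly with the paper's own argument: the weight $F((p,a_0,s))$ is built from the midpoints of $I_K(p)$ and $I_K(s)$, each cylinder having $\diam_K \leq 1/Q$ because $p,s \in C_{K,Q}$, so each midpoint contributes an error of at most $1/(2Q)$ and the total is at most $1/Q$. Your treatment of the recognition half --- lifting the unique path in $G^+_{K,Q}$ to $G_{K,Q}$ and checking that the prefix $p'$ of $a_i p$ in $C_{K,Q}$ is well defined, via the observation that prepending a digit does not increase $\diam_K$ so that a minimal prefix with diameter $\leq 1/Q$ exists and is unique by prefix-freeness --- actually goes beyond the paper, whose published proof addresses only the $1/Q$ estimate and leaves the recognition claim implicit.
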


\begin{proof}
Recall that the weights defined in~\eqref{eq:GKQ:weights} are in between the
extremal possible values of $\lambda_0(\ua)$ in the cylinder $(p,a_0,s)$. But
$p$ and $s$ are in $C_{K,Q}$ constructed in Section~\ref{sec:one:sided} and
were chosen so that the corresponding image under the continued fraction map
have diameter $<1/Q$. Hence for each of $\mid_K(p)$ and $\mid_K(s)$
we are off by at most $1/(2Q)$ so that $a_0 + \mid_K(p) + \mid_K(s)$ is off
by at most $1/Q$.
\end{proof}

\begin{remark} Since the Gauss map $g(x)=\{1/x\}$ has derivative $g'(x)=-1/x^2$, we have $(\alpha_K^-)^{2n} \leq \frac{\diam_K(b)}{(\alpha_K^+ -\alpha_K^-)}\leq (\alpha_K^+)^{2n}$ for any $b\in\{1,\dots, K\}^n$.

In particular, an alternative way of constructing a subshift would have been to pick
all possible cylinder of combinatorial length $n$ (where $n$ is chosen
large enough so that all diameters are smaller than $(1/Q)$), but this
would have lead to a larger set of cylinders.
\end{remark}

\section{Lagrange and Markov edges in weighted directed graphs}
\label{sec:lagrange-markov-in-graph}

Let $G$ be a weighted directed graph. We denote $V(G)$ and $E(G)$ respectively
the vertices and edges of $G$ and $w: E(G) \to \RR$ the weight function. The codomain
of the weight function needs not be $\RR$, any totally ordered set would do.

We call an edge $e$ in $G$ to be a \emph{Lagrange edge} if there exists a cycle
$\gamma$ in $G$ that passes through $e$ and so that the weight of the edge $e$ is
maximal among the weights of edges in $\gamma$. An edge is called a
\emph{Markov edge} if there exist two cycles $\gamma^-$ and $\gamma^+$ and a
path $p$ from $\gamma^-$ to $\gamma^+$ so that the edge $e$ is maximal among
the weights of edges in $\gamma^- \cup p \cup \gamma^+$. The definition is
illustrated with a simple example on Figure~\ref{fig:lagrange_markov_edges}.
\begin{figure}[!ht]
\begin{center}\includegraphics{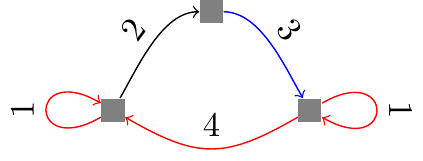}\end{center}
\caption{A weighted graph with its Lagrange edges in red and its single Markov but
not Lagrange edge in blue.}
\label{fig:lagrange_markov_edges}
\end{figure}

A simple approach for computing these edges is to test for each edge
whether it is Lagrange or Markov.
\begin{theorem}
\label{thm:testing_one_edge}
Given $G$ a directed graph and $w: E(G) \to \RR$ and an edge $e$ of $G$.
Determining whether $e$ is Lagrange or Markov has complexity $O(m)$
where $m$ is the number of edges in $G$.
\end{theorem}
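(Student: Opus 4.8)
The plan is to reduce both the Lagrange and Markov tests for a fixed edge $e = (x \to y)$ with weight $w(e)$ to reachability queries in a suitable subgraph that can be built and searched in linear time. The key observation is that the maximality condition on $w(e)$ is simply a constraint that forbids using any edge of strictly larger weight. So first I would form the \emph{admissible subgraph} $G_e$ obtained from $G$ by deleting every edge $f$ with $w(f) > w(e)$ (keeping ties, since the definition requires $e$ to be maximal, not strictly maximal). By construction, any cycle or path living entirely in $G_e$ uses only edges of weight $\le w(e)$, and conversely any cycle/path in $G$ on which $e$ is maximal lies in $G_e$. Deleting these edges takes $O(m)$ time.

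For the Lagrange test, the edge $e$ is Lagrange precisely when there is a cycle in $G_e$ through $e$, equivalently a path in $G_e$ from $y$ back to $x$ (which, together with $e$ itself, closes up into a cycle on which $e$ is maximal). Hence I would run a single graph search (BFS or DFS) from $y$ in $G_e$ and check whether $x$ is reachable. This is one linear-time traversal, so $O(m)$ overall.

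For the Markov test, the edge $e$ is Markov when there exist cycles $\gamma^-$ and $\gamma^+$ and a path $p$ from $\gamma^-$ to $\gamma^+$, all inside $G_e$, with $e$ maximal on the union. The cleanest way to capture this is to require $e$ itself to lie on the path $p$ (the general definition can be arranged so that the maximal edge sits on the connecting path, since a maximal edge on $\gamma^-\cup p\cup\gamma^+$ can always be placed on $p$ by rerouting). Thus $e$ is Markov iff, in $G_e$, the target $y$ can reach some vertex lying on a cycle, and the source $x$ can be reached from some vertex lying on a cycle. A vertex lies on a cycle in $G_e$ exactly when it belongs to a nontrivial strongly connected component (or carries a self-loop). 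So I would compute the strongly connected components of $G_e$ in linear time (Tarjan's algorithm), mark the vertices that lie on some cycle, then do two linear-time searches: a forward search from $y$ to test whether it reaches a cycle vertex, and a backward search into $x$ (i.e.\ a forward search in the reverse graph) to test whether $x$ is reachable from a cycle vertex. Each of these steps is $O(m)$, so the whole Markov test is $O(m)$.

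The main obstacle is getting the Markov reduction exactly right, namely verifying that ``$e$ is maximal on $\gamma^-\cup p\cup\gamma^+$'' is genuinely equivalent to the existence inside $G_e$ of the two reachability certificates described above, including the boundary cases where $\gamma^-$ or $\gamma^+$ degenerates, where $e$ coincides with an edge of one of the cycles, or where several edges attain the maximal weight. I would handle ties by keeping all weight-$w(e)$ edges in $G_e$ and arguing that rerouting the maximal edge onto the connecting path preserves the three-piece structure without increasing the maximum weight; the degenerate cases reduce to the Lagrange test already handled. Once this equivalence is pinned down, the algorithm is a fixed sequence of linear-time traversals and one strongly-connected-components computation, all $O(m)$, which is the claimed bound.
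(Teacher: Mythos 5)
Your proof is correct and follows essentially the same route as the paper: restrict to the subgraph of edges with weight at most $w(e)$, test the Lagrange condition by a single reachability search from the target back to the source, and test the Markov condition by checking forward and backward reachability of a cycle (the paper detects cycles directly via two depth-first searches where you use an SCC computation, but this is an implementation detail with the same linear cost). Your more explicit verification of the equivalence for the Markov case, including the situation where $e$ lies on one of the cycles rather than the connecting path, fills in details the paper leaves implicit.
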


\begin{proof}
Let $e$ be an edge and $u$ and $v$ its source and target. Then $e$ is a
Lagrange edge if and only if there is a path from $v$ to $u$ with
maximum edge weight $w(e)$. To compute that, one can simply do a depth-first
search on edges with weight not greater than $w(e)$. Hence testing
whether a single edge is Lagrange is $O(m)$. Now $e$ is a Markov edge
if one can build a path connected to a cycle (both backward and forward).
Similarly, one can detect such cycle with two depth first searches. In
both cases, the search is bounded by the number of edges in the graph $G$.
\end{proof}

As a consequence of Theorem~\ref{thm:testing_one_edge} the complexity of
computing all Lagrange and Markov edges in a given graph $G$ has complexity
$O(m^2)$ where $m$ is the number of edges in $G$. We now describe a procedure
to reduce the computational time for the search of Lagrange and Markov
edges based on online cycle detection and strongly connected component
maintenance~\cite{HKMST08,HKMST12,BFGT}.

\begin{theorem}
Computing the set of weights of Lagrange edges or the set of weights of
Markov edges in a directed weighted graph $G$ can be achieved in
$O(m^{3/2})$ where $m$ is the number of edges of $G$.
\end{theorem}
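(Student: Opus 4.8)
The plan is to improve on the naive $O(m^2)$ bound (obtained by running the $O(m)$ single-edge test of Theorem~\ref{thm:testing_one_edge} on all $m$ edges) by processing the edges in a single sweep in decreasing order of weight, and reusing the structural information built up so far rather than recomputing it from scratch for each edge. The key observation is that an edge $e=(u,v)$ is a Lagrange edge precisely when $v$ can reach $u$ using only edges of weight $\le w(e)$, that is, when $u$ and $v$ lie in a common cycle of the subgraph $G_{\le w(e)}$ consisting of all edges of weight at most $w(e)$; equivalently $e$ is Lagrange iff $u$ and $v$ belong to the same strongly connected component (SCC) of $G_{\le w(e)} \cup \{e\}$. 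Since $G_{\le w(e)}$ grows monotonically as we decrease the threshold through the sorted weights, we can build it incrementally: sort the edges (cost $O(m\log m)$), then insert them one at a time from heaviest to lightest, maintaining the SCC decomposition under edge insertions.

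First I would reduce the Lagrange test to an incremental SCC-maintenance problem. Process edges in order of \emph{decreasing} weight. When we are about to insert edge $e=(u,v)$, all previously inserted edges have weight $\ge w(e)$, so they form exactly the subgraph of edges of weight $> w(e)$ (ties handled by grouping equal-weight edges). To test whether $e$ is Lagrange we must check whether $v$ already reaches $u$ through edges of weight $\ge w(e)$ together with $e$ itself — this is exactly asking whether inserting $e$ closes a cycle, i.e. whether $u$ and $v$ are forced into the same SCC. This is precisely the \emph{incremental cycle detection / SCC maintenance} problem, for which the cited data structures of~\cite{HKMST08,HKMST12,BFGT} support a sequence of $m$ edge insertions into an $n$-vertex, $m$-edge digraph in total time $O(m^{3/2})$, reporting after each insertion whether a cycle was created and keeping the SCC partition up to date. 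An edge $e$ is then flagged as Lagrange exactly when, at the moment of its insertion, its endpoints are (or become) strongly connected in the current graph. Summing over all insertions gives the claimed $O(m^{3/2})$ total, dominating the $O(m\log m)$ sort.

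For the Markov edges the plan is to augment this sweep with reachability information to and from the SCCs already present. Recall $e=(u,v)$ is Markov iff there is a cycle $\gamma^-$, a path through $e$, and a cycle $\gamma^+$, all using edges of weight $\le w(e)$; equivalently, in $G_{\le w(e)} \cup \{e\}$ the target $v$ can reach some nontrivial SCC (a vertex lying on a cycle) and the source $u$ is reachable from some nontrivial SCC. As we insert edges heaviest-first and maintain the SCC partition, I would additionally maintain two monotone bits per SCC in the condensation: whether the SCC can reach a nontrivial SCC going forward, and whether it is reachable from a nontrivial SCC going backward. Because the underlying graph only gains edges, the set of nontrivial SCCs and these reachability flags evolve monotonically, so they can be refreshed through the same SCC-maintenance machinery without exceeding the $O(m^{3/2})$ budget; an edge is declared Markov when, at insertion time, $v$'s component carries the forward flag and $u$'s component carries the backward flag (a nontrivial SCC containing $e$ itself handles the degenerate cases).

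The main obstacle will be controlling the bookkeeping for the Markov test so that maintaining the forward/backward reachability-to-a-cycle flags does not blow up the complexity: the SCC-maintenance algorithms of~\cite{HKMST08,HKMST12,BFGT} give the $O(m^{3/2})$ bound for detecting cycle creation and maintaining the component partition, but propagating the two reachability bits through the condensation as components merge must be shown to cost only amortized work that is absorbed into that bound. I would argue this by noting that each flag can only flip from false to true once per component, components only merge (never split) under insertions, and the total number of merges over the whole sweep is $O(n)$, so the extra propagation is dominated by the traversal work already accounted for in the incremental SCC data structure. With these flags in place, both the Lagrange and the Markov edge sets are read off directly during the single decreasing-weight sweep, yielding the stated $O(m^{3/2})$ complexity.
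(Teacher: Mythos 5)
Your proposal contains a genuine error: the sweep direction is inverted. Your key observation is stated correctly --- $e=(u,v)$ is Lagrange iff $v$ reaches $u$ through edges of weight $\le w(e)$, i.e.\ iff $e$ lies on a cycle of $G_{\le w(e)}$ --- but the algorithm you then describe inserts edges from \emph{heaviest to lightest}, so that when $e$ is inserted the current graph consists of the edges of weight $\ge w(e)$. Testing whether the insertion of $e$ closes a cycle in that graph detects whether $e$ is \emph{minimal} in some cycle, not maximal; indeed, in the same sentence you silently replace the condition ``$v$ reaches $u$ through edges of weight $\le w(e)$'' by ``through edges of weight $\ge w(e)$,'' which is a different (and wrong) condition. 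A three-edge directed cycle with weights $1,2,3$ shows the two do not coincide even at the level of weight sets: the only Lagrange edge is the one of weight $3$, while your sweep would flag only the edge of weight $1$ (the last one inserted, which closes the cycle). The claim that ``$G_{\le w(e)}$ grows monotonically as we decrease the threshold'' is the source of the confusion --- it grows as the threshold \emph{increases}. The same inversion corrupts your Markov test, since the cycles $\gamma^-$, $\gamma^+$ and the connecting path must also consist of edges of weight $\le w(e)$.

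The fix is simply to run your sweep in the opposite direction, inserting edges in non-decreasing order of weight; then at the moment $e_k$ is inserted the current graph is exactly $G_{\le w(e_k)}$ (up to ties, which are harmless for the set of \emph{weights}, since the last-inserted maximal edge of a cycle still gets flagged with the same weight), and ``$e_k$ closes a cycle'' becomes equivalent to ``$e_k$ is Lagrange.'' With that correction your argument coincides with the paper's proof: both reduce the problem to incremental cycle detection and strong-component maintenance via~\cite{HKMST08,HKMST12,BFGT} in $O(m^{3/2})$, and both handle Markov edges by maintaining monotone forward/backward ``can reach a nontrivial strongly connected component'' flags whose total update cost is amortized $O(m)$ because flags only flip once and components only merge. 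So nothing in your complexity accounting needs to change --- only the orientation of the sweep.
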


\begin{proof}
Order the edges in $G$ by non-decreasing weight. Namely
$E(G) = \{e_1, \ldots, e_m\}$ with $w(e_i) \leq w(e_{i+1})$.
We define a sequence of acyclic graphs $(G^{(k)})_{k=0,\ldots,m}$ by considering
the graph obtained from the edges $\{e_1, \ldots, e_k\}$, identifying
the vertices that belong to a same strongly connected component\footnote{Recall that a directed graph is strongly connected whenever there are oriented paths joining any given pair of vertices and a strongly connected component of a directed graph is a maximal strongly connected directed subgraph.} and
removing the loops (edges from a vertex to itself).
The graph $G^{(k)}$ is concretely obtained from $G^{(k-1)}$ by adding the
$k$-th edge and possibly identifying vertices in a newly appeared
strongly connected component. As shown in~\cite{HKMST08,HKMST12,BFGT},
maintaining the strongly connected components in dynamical graph,
or equivalently computing the sequence $G^{(k)}$, can be done in $O(m^{3/2})$.

Now the weights of Lagrange edges are exactly the weights of edges $e_k$
that create a cycle when they are added in $G^{(k)}$. This shows that it
comes at no additional cost. For Markov edges however, one needs to make
a further traversal of the graph. However this can be reduced to a total
cost of $m$ from which we obtain the same upper bound
$O(m^{3/2} + m) = O(m^{3/2})$. In order to do so, one needs to maintain
two flags for each edge: whether it can reach in forward and backward
directions a non-trivial strongly connected component. Updating these
flags is only performed when a new strongly connected component is
detected and each edge is at most traversed once.

Now, having this extra information an edge $e_k$ is Markov if and
only if at the time it is added in $G^{(k)}$ it is such that
its target can reach a strongly connected component in forward
direction and its source can reach a strongly connected component
in backward direction.
\end{proof}

\section{Approximation of Lagrange and Markov spectra}
\label{sec:justification}
Recall that we defined graphs $G_{K,Q,R}$ in Section~\ref{sec:graphs}
and Lagrange and Markov edges in a weighted directed graph were
defined in Section~\ref{sec:lagrange-markov-in-graph}. The main aim of
this section is to prove the following result.
\begin{theorem}
\label{thm:lagrange:markov:edges:approximate}
For any $K,Q$ the set of weights of respectively Lagrange and Markov
edges in $G_{K,Q}$ is $1/Q$-close to respectively $\LagrangeSpec_K$
and $\MarkovSpec_K$.
\end{theorem}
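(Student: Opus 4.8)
The plan is to establish a two-way $(1/Q)$-approximation between the weights of Lagrange (resp. Markov) edges of $G_{K,Q}$ and the spectrum $\LagrangeSpec_K$ (resp. $\MarkovSpec_K$), using the dictionary that biinfinite paths in $G_{K,Q}$ correspond bijectively to elements of $\Sigma_K$ (the recognition lemma) and that along any shift edge the weight $F$ approximates $\lambda_0$ to within $1/Q$ (the previous lemma). The key conceptual point I would isolate first is the precise translation between the dynamical definitions of $\Lagrange$ and $\Markov$ and the combinatorial definitions of Lagrange and Markov edges. Recall that for $\ua \in \Sigma_K$ we have $\Lagrange(\ua) = \limsup_{n} \lambda_0(\sigma^n \ua)$ and $\Markov(\ua) = \sup_n \lambda_0(\sigma^n \ua)$. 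Reading $\ua$ along its unique biinfinite path $\gamma$, each integer shift $\sigma^n \ua$ corresponds to passing through a shift edge $e_n$ of $\gamma$ whose weight $F(e_n)$ approximates $\lambda_0(\sigma^n \ua)$ within $1/Q$; so the dynamical height at each "time step" is mirrored, up to $1/Q$, by an edge weight along the path.

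\medskip

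\noindent\textbf{From spectrum to edges.} First I would prove that every value in $\LagrangeSpec_K$ is within $1/Q$ of the weight of some Lagrange edge. Fix $\ua \in \Sigma_K$ and let $\ell = \Lagrange(\ua) = \limsup_n \lambda_0(\sigma^n \ua)$. Along the path $\gamma$ reading $\ua$, infinitely many shift edges have weight close to $\ell$. Because $\gamma$ is a biinfinite path in a finite graph, some shift edge $e$ with $F(e)$ close to $\ell$ is visited infinitely often, and the segment of $\gamma$ between two consecutive visits to $e$ is a closed cycle through $e$; one chooses the visit so that $e$ realizes the maximum weight along that cycle (this is where the $\limsup$ being the largest recurrent value matters, so that no edge of strictly larger weight is forced into the cycle). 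This exhibits $e$ as a Lagrange edge with $|F(e) - \ell| < 1/Q$. For the Markov direction one argues identically but using $\sup$ in place of $\limsup$: the supremum is approached (and, on the compact shift, essentially attained in the limit) by some time $n_0$; one then selects backward and forward recurrent edges to build cycles $\gamma^-,\gamma^+$ joined by a path $p$ through the edge $e$ near $\sigma^{n_0}\ua$, with $e$ maximal on $\gamma^- \cup p \cup \gamma^+$, witnessing that $e$ is a Markov edge.

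\medskip

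\noindent\textbf{From edges to spectrum.} Conversely I would show every Lagrange (resp. Markov) edge weight is within $1/Q$ of a value in $\LagrangeSpec_K$ (resp. $\MarkovSpec_K$). Given a Lagrange edge $e$ with cycle $\gamma$ on which $e$ is weight-maximal, form the periodic biinfinite path $\overline{\gamma}$ that traverses $\gamma$ repeatedly; it reads a periodic point $\ua \in \Sigma_K$. Since $e$ is maximal on $\gamma$ and $\gamma$ is repeated, $\limsup_n \lambda_0(\sigma^n \ua)$ is attained at the time steps corresponding to $e$, giving $|\Lagrange(\ua) - F(e)| < 1/Q$. For a Markov edge with data $(\gamma^-, p, \gamma^+)$, I would build the biinfinite word that is $\gamma^-$ repeated to the left, then $p$, then $\gamma^+$ repeated to the right; the maximality of $e$ on the whole configuration gives $\Markov(\ua)$ realized near the time step of $e$, hence $|\Markov(\ua) - F(e)| < 1/Q$. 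Here one should double-check that the resulting word genuinely lies in $\Sigma_K$ and that these periodic/eventually-periodic values are dense enough — but this is exactly the classical fact, cited in the introduction from \cite[Chapter 3]{CF}, that the spectra are closures of values on (ultimately) periodic sequences, so no essential difficulty arises.

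\medskip

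\noindent I expect the main obstacle to be the careful verification of maximality when extracting a cycle from the biinfinite path in the forward direction: the recurrent edge of near-maximal weight must be chosen so that the enclosed cycle does not accidentally contain an edge of strictly larger weight, which would spoil the "maximal on $\gamma$" requirement in the definition of a Lagrange edge. Handling this cleanly requires using the $\limsup$ characterization to guarantee that weights strictly exceeding $\ell + 1/Q$ occur only finitely often along $\gamma$, so that far enough out the near-maximal edge is indeed the maximum on the cycle it bounds; the Markov case needs the analogous control on both the backward and forward tails of the path.
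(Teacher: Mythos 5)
Your proposal is correct, and its core argument for the direction from spectrum to edges is essentially the paper's: read $\ua$ along its unique biinfinite path in $G_{K,Q}$, pick the edge of (recurrent) maximal weight, and extract forward and backward cycles from self-intersections of the path, on which that edge is automatically weight-maximal. The paper writes this only for the Markov case (taking the edge $e'$ of maximal weight on the whole path --- which exists exactly because the graph is finite, a cleaner move than your ``supremum essentially attained in the limit'' phrasing) and declares the Lagrange case similar; your handling of the Lagrange $\limsup$, discarding the finitely many indices where weights exceed the recurrent maximum before closing up a cycle, is the right way to fill in that ``similar''. Where you genuinely add something is the converse inclusion: the paper's proof shows only that every value of $\MarkovSpec_K$ is $1/Q$-close to a Markov edge weight, and never argues that every Lagrange/Markov edge weight is $1/Q$-close to the spectrum, although Hausdorff closeness requires both halves. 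Your construction --- periodize a cycle through a Lagrange edge, or concatenate $\gamma^-$ repeated leftward, $p$, and $\gamma^+$ repeated rightward for a Markov edge --- supplies exactly this missing half. Two remarks on it: you do not need the classical fact from \cite{CF} that the spectra are closures of (ultimately) periodic values, since the word you build lies in $\Sigma_K$ and hence its Lagrange/Markov value is in the spectrum by definition; and for this direction one should also observe (as neither you nor the paper does explicitly) that a prolongation edge, having weight $0$, can never be a Lagrange or Markov edge --- every cycle must contain a shift edge of weight at least $1$, because prolongation edges strictly increase word length --- so the maximal edge in your construction is always a shift edge and the $1/Q$-approximation of $\lambda_0$ by its weight indeed applies.
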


\begin{proof}
We do the proof for the Markov spectrum, the case of Lagrange being similar.

Let $\ua \in \Sigma$ and let $m := \Markov(\ua) = \sup_{n \in \ZZ} \lambda_0(\sigma^n(\ua))$.
We will show that there is a Markov edge whose weight is $1/Q$-close
to $m$. Let $e_n$ be the shift edge corresponding to $\sigma^n(\ua)$
in the graph $G_{K,Q}$. This sequence of shift edges determine a biinfinite
path $\gamma$. Let $e'$ be the edge in $\gamma$ with maximal weight.
We claim that $e'$ is a Markov edge of the graph $G_{K,Q}$ and that
$|m - F(e')| < 1/Q$.

By construction, we have $|\lambda_0(\sigma^n(\ua)) - F(e_n)| < 1/Q$. Hence
the weight $F(e')$ of the supremum satisfies the required bound. Let $n_0$
be any index such that $e_{n_0} = e'$. Since $\gamma$ is biinfinite, there
is a smallest $k$ such that the path $e_{n_0}, e_{n_0+1}, \ldots, e_{n_0+k}$
intersects itself. That is, there is $0 \leq k' < k$ with $e_{n_0+k'} = e_{n_0+k}$.
Since the weights on the edges between indices $n_0+k'$ and $n_0+k$ are
at most $F(e_{n_0})$ we constructed a cycle $\gamma^+$ all of whose weights
are at most $F(e_{n_0})$ that can be reached from $e_{n_0}$ by a path
with weights not larger than $F(e_{n_0})$. The construction of a path
$\gamma^-$ in backward direction is performed similarly and concludes
the fact that $e'$ is a Markov edge in $G_{K,Q}$.
\end{proof}

\section{The size of $C_{K,Q}$ and algorithm complexity}
\label{sec:complexity}

As we saw in Section~\ref{sec:lagrange-markov-in-graph} the time complexity
of detecting Lagrange and Markov edges in a graph is polynomial in the size
of the graph. In this section we provide an upper bound on the size
of the graphs $G_{K,Q}$. It will prove the polynomial bounds in
Theorem~\ref{thm:algo}.

\begin{lemma}
\label{lem:growth:CKQ:and:GKQ}
The number of edges in $G_{K,Q}$ is bounded by
\[
|C_{K,Q}| \left(|C_{K,Q}| + \log_2 \left( \frac{(K(K+1)+1)(K+2)}{K} Q \right) \right).
\]
\end{lemma}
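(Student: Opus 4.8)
The plan is to bound the number of edges in $G_{K,Q}$ by separately counting the two kinds of edges—prolongation edges and shift edges—in terms of the combinatorial data of the tree $T_{K,Q}$ and the set $C_{K,Q}$. Recall from the construction in Section~\ref{sec:graphs} that a shift edge of $G_{K,Q}$ is indexed by a triple $(p,a_0,s) \in C_{K,Q} \times \{1,\ldots,K\} \times C_{K,Q}$, while a prolongation edge of $G_{K,Q}$ is obtained from a prolongation edge of $G^+_{K,Q}$ together with a choice of $p \in C_{K,Q}$ and $a_0 \in \{1,\ldots,K\}$.

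The shift edges are the dominant term. First I would observe that the source of each shift edge carries in its third coordinate an element $s$ of $C_{K,Q}$, and the edge is determined by the pair $(p,s)$ once the middle letter $a_0$ is pinned down; but in fact the map $(p,a_0,s) \mapsto (\text{source}, \text{target})$ shows each shift edge is one of at most $|C_{K,Q}| \cdot K \cdot |C_{K,Q}|$ combinatorial choices. The key simplification is that the letter $a_0$ and the first letter $s_1$ of $s$ are constrained by the target formula $(p',s_1,v)$, so the genuine count of shift edges is bounded by $|C_{K,Q}|^2$ rather than $K|C_{K,Q}|^2$: for each ordered pair of elements of $C_{K,Q}$ there is essentially one shift edge, since the middle letter is recovered from the overlap structure. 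This gives the leading $|C_{K,Q}|^2$ term.

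For the prolongation edges I would count the prolongation edges of $G^+_{K,Q}$ first, multiply by the $|C_{K,Q}|$ choices of $p$ (the factor of $K$ from $a_0$ being absorbed, or bounded, into the logarithmic term). A prolongation edge of $G^+_{K,Q}$ corresponds to a tree path in $T_{K,Q}$ between consecutive vertices of $V^+_{K,Q}$, and the number of such edges is controlled by the \emph{depth} of the tree $T_{K,Q}$. The depth is where the logarithm enters: using the Remark that $(\alpha^-_K)^{2n} \leq \diam_K(\vec b)/(\alpha^+_K - \alpha^-_K) \leq (\alpha^+_K)^{2n}$ for a word of length $n$, any leaf $\vec b \in C_{K,Q}$ has length at most roughly $\tfrac{1}{2}\log_{1/\alpha^-_K} Q$ plus a constant accounting for the factor $(\alpha^+_K - \alpha^-_K)$ and the jump from $\vec b'$ to $\vec b$. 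Converting this bound into base-$2$ logarithms and tracking the explicit constants $\alpha^-_K$, the maximal partial quotient $K$, and the gap $\alpha^+_K - \alpha^-_K$ in terms of $K$, one should arrive at a depth bound of the stated shape $\log_2\!\left(\tfrac{(K(K+1)+1)(K+2)}{K}\,Q\right)$.

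The main obstacle I anticipate is pinning down the exact constant inside the logarithm, i.e.\ verifying that the crude estimate $\diam_K(\vec b') > 1/Q \geq \diam_K(\vec b)$ together with the derivative bound on the Gauss map yields precisely the factor $\tfrac{(K(K+1)+1)(K+2)}{K}$ rather than merely a constant times $Q$. This requires carefully estimating the ratio $\diam_K(\vec b')/\diam_K(\vec b)$—which measures how much the diameter can shrink when a single letter is appended—in terms of the largest admissible partial quotient and the endpoints $\alpha^\pm_K$, since it is this one-step shrinkage that governs how far past the threshold $1/Q$ a leaf can lie. Once that ratio is bounded by the displayed quantity, the depth bound follows, and combining the $|C_{K,Q}|^2$ shift-edge count with the $|C_{K,Q}| \cdot (\text{depth})$ prolongation-edge count yields the claimed bound on the number of edges in $G_{K,Q}$.
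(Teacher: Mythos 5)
Your overall decomposition is the same as the paper's: shift edges give the $|C_{K,Q}|^2$ term, prolongation edges give $|C_{K,Q}|$ times a bound on the depth of the tree $T_{K,Q}$, and the ``one-step shrinkage ratio'' $\diam_K(\vec{b}')/\diam_K(\vec{b})$ that you single out at the end is precisely the content of Lemma~\ref{lem:CKQ:low:bound:diameter}, which is what supplies the constant $\frac{(K(K+1)+1)(K+2)}{K}$ \emph{inside} the logarithm. The genuine gap is in how you bound the depth: that bound does not follow from the shrinkage ratio alone, nor from the Remark you invoke.

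You propose to control the length $n$ of a leaf $\vec{b} \in C_{K,Q}$ via the estimate $(\alpha^-_K)^{2n} \leq \diam_K(\vec{b})/(\alpha^+_K - \alpha^-_K) \leq (\alpha^+_K)^{2n}$. Two problems. First, your stated bound ``length at most roughly $\tfrac12 \log_{1/\alpha^-_K} Q$'' uses the $\alpha^-_K$ side, which runs the wrong way: combining $(\alpha^-_K)^{2n} \leq \diam_K(\vec{b})/(\alpha^+_K-\alpha^-_K)$ with $\diam_K(\vec{b}) \leq 1/Q$ yields a \emph{lower} bound on $n$; an upper bound must come from the $(\alpha^+_K)^{2n}$ side, applied either to $\vec{b}$ together with Lemma~\ref{lem:CKQ:low:bound:diameter}, or to the parent $\vec{b}'$ together with $\diam_K(\vec{b}') > 1/Q$. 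Second, and more seriously, even the corrected version cannot deliver the stated inequality: it gives $n \lesssim \log_{(1/\alpha^+_K)^2}(\mathrm{const}\cdot Q)$, and for $K \in \{2,3,4\}$ one has $(1/\alpha^+_K)^2 < 2$ (for instance $(1/\alpha^+_4)^2 = (2\sqrt{2}-2)^{-2} \approx 1.46$), so converting to base $2$ multiplies the logarithm by $1/\log_2\bigl((1/\alpha^+_K)^2\bigr) > 1$ (about $1.84$ for $K=4$). A multiplicative constant $c_K>1$ in front of $\log_2 Q$ cannot be absorbed into the constant inside the logarithm, so your route proves only a bound of the form $|C_{K,Q}|\bigl(|C_{K,Q}| + c_K\log_2(\mathrm{const}\cdot Q)\bigr)$, which is strictly weaker than the claim. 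The missing ingredient is Lemma~\ref{lem:CF:diameter} (Cusick--Flahive): any cylinder of combinatorial length $n$ has diameter less than $2^{-(n-1)}$, \emph{independently of $K$}. This universal base-$2$ decay, combined with the lower bound of Lemma~\ref{lem:CKQ:low:bound:diameter}, gives $2^{n-1} < \frac{(K(K+1)+1)(K+2)}{K}\,Q$, hence the depth bound with coefficient exactly $1$ in front of $\log_2$; these two lemmas are exactly the paper's (very short) proof. A final, minor point: your claim that the middle letter $a_0$ of a shift edge ``is recovered from the overlap structure'' contradicts the construction, where $a_0$ is a free index (shift edges are indexed by $C_{K,Q}\times\{1,\ldots,K\}\times C_{K,Q}$), and similarly $K\log_2 x$ is not $\log_2(Kx)$; the paper's own proof is equally cavalier about factors of $K$ (it counts shift edges by $C_{K,Q}\times C_{K,Q}$), so this affects you and the paper alike and is harmless for the $O(Q^{2\HDim(E_K)})$ corollary, but the justification you offer for dropping $K$ is not correct.
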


To prove the lemma we need to intermediate results that will also
be used later. The first one gives a control on the diameter $\diam_K(b)$
in terms of the combinatorial length.
\begin{lemma}[\cite{CF}, Lemma 2 p. 2]
\label{lem:CF:diameter}
Let $\alpha=[a_0; a_1,\dots, a_n, a_{n+1},\dots]$ and $\beta=[a_0; a_1,\dots, a_n, b_{n+1},\dots]$ with $a_{n+1}\neq b_{n+1}$. Then $|\alpha-\beta|<1/2^{n-1}$.
\end{lemma}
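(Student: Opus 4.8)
The plan is to reduce the statement to the classical estimate for the length of a continued-fraction cylinder, and then to bound the growth of the denominators $q_n$.

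First I would record that, since $\alpha$ and $\beta$ share the partial quotients $a_0, a_1, \ldots, a_n$, they have exactly the same convergents $p_k/q_k$ for $0 \le k \le n$, where $p_k, q_k$ obey the usual recursion $p_k = a_k p_{k-1} + p_{k-2}$, $q_k = a_k q_{k-1} + q_{k-2}$ with $p_{-1}=1$, $q_{-1}=0$, $p_{-2}=0$, $q_{-2}=1$. Writing $\xi = [a_{n+1}; a_{n+2}, \ldots]$ and $\eta = [b_{n+1}; b_{n+2}, \ldots]$ for the two tails, the standard identity gives
\[
\alpha = \frac{p_n \xi + p_{n-1}}{q_n \xi + q_{n-1}}, \qquad \beta = \frac{p_n \eta + p_{n-1}}{q_n \eta + q_{n-1}}.
\]
Both tails satisfy $\xi, \eta > 1$, since each is of the form $a_{n+1} + [0; a_{n+2}, \ldots]$ with integer part $a_{n+1} \ge 1$ and fractional part in $(0,1)$.

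Next I would observe that the M\"obius map $f(t) = (p_n t + p_{n-1})/(q_n t + q_{n-1})$ is monotone on $[1, \infty)$: its derivative equals $(-1)^{n-1}/(q_n t + q_{n-1})^2$, which never changes sign thanks to the identity $p_n q_{n-1} - p_{n-1} q_n = (-1)^{n-1}$. Hence $f$ carries the open half-line $(1, \infty)$ onto the open interval with endpoints $f(1) = (p_n + p_{n-1})/(q_n + q_{n-1})$ and $f(\infty) = p_n/q_n$. A one-line computation using the same determinant identity shows that this interval --- the cylinder of $[a_0; a_1, \ldots, a_n]$ --- has length
\[
\left| \frac{p_n}{q_n} - \frac{p_n + p_{n-1}}{q_n + q_{n-1}} \right| = \frac{1}{q_n(q_n + q_{n-1})}.
\]
Since $\xi, \eta$ lie strictly inside $(1, \infty)$, the points $\alpha = f(\xi)$ and $\beta = f(\eta)$ lie strictly inside this interval, whence $|\alpha - \beta| < 1/(q_n(q_n + q_{n-1}))$. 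The hypothesis $a_{n+1} \ne b_{n+1}$ is exactly what guarantees $\xi \ne \eta$, but in any event two points of an open interval of length $L$ are at distance $< L$.

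It remains to turn the denominator bound into the stated power of two. Because every $a_k \ge 1$, the recursion yields $q_k = a_k q_{k-1} + q_{k-2} \ge q_{k-1} + q_{k-2} \ge 2 q_{k-2}$, and iterating from $q_0 = 1$, $q_1 \ge 1$ gives $q_n \ge 2^{\lfloor n/2 \rfloor}$, hence $q_n^2 \ge 2^{n-1}$. Since $q_{n-1} \ge q_0 = 1 > 0$ for $n \ge 1$, we obtain $q_n(q_n + q_{n-1}) > q_n^2 \ge 2^{n-1}$, and therefore $|\alpha - \beta| < 1/q_n^2 \le 1/2^{n-1}$, as claimed. The only points requiring care are the verification that both tails are genuinely $> 1$ (so as to land in the \emph{open} cylinder and secure a strict inequality) and the elementary growth estimate $q_n \ge 2^{\lfloor n/2 \rfloor}$; neither presents a real obstacle, so the substantive content is simply the classical cylinder-length formula displayed above.
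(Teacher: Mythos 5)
Your proof is correct and coincides with the standard argument: the paper itself gives no proof of this lemma, quoting it directly from Cusick--Flahive \cite{CF}, and the proof there is precisely your reduction to the cylinder-length formula $1/\bigl(q_n(q_n+q_{n-1})\bigr)$ followed by the denominator growth bound $q_n \ge 2^{\lfloor n/2 \rfloor}$, which gives $q_n^2 \ge 2^{n-1}$. The details you flag as needing care (tails strictly greater than $1$, hence landing in the open cylinder, and the induction for the growth estimate) are handled correctly.
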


The second provides a lower bound on the diameter for elements in $C_{K,Q}$.
\begin{lemma}
\label{lem:CKQ:low:bound:diameter}
For any $Q,K$ and any $\vec{b}\in C_{K,Q}$ we have
\[
\frac{K}{(K(K+1)+1)(K+2)}\cdot\frac{1}{Q}\leq \diam_K(\vec{b}).
\]
\end{lemma}

\begin{proof}
On one hand, we have $\frac{\diam_K(\vec{b})}{\diam_K(\vec{b}')}=\frac{(q_{n-1}+\alpha_K^+q_{n-2})(q_{n-1}+\alpha_K^-q_{n-2})}{(q_n+\alpha_K^+ q_{n-1})(q_n+\alpha_K^- q_{n-1})}$ for each $\vec{b}\in C_{K,Q}$. On the other hand, we have that
$$(\alpha_K^+\alpha_K^-)^{-1} (q_n+\alpha_K^+ q_{n-1})(q_n+\alpha_K^- q_{n-1})<((K+1)q_n+q_{n-1})((1+1/K)q_n+q_{n-1})$$
and
$$(\alpha_K^+\alpha_K^-)^{-1}(q_{n-1}+\alpha_K^+q_{n-2})(q_{n-1}+\alpha_K^-q_{n-2})>(Kq_{n-1}+q_{n-2})(q_{n-1}+q_{n-2}).$$
Since
$$(\tfrac{K+1}{K}q_n+q_{n-1}) = (\tfrac{K+1}{K}a_nq_{n-1}+\tfrac{K+1}{K}q_{n-2}+q_{n-1})\leq (K+2)(q_{n-1}+q_{n-2})$$
and
$$((K+1)q_n+q_{n-1}) = ((K+1)a_nq_{n-1}+(K+1)q_{n-2}+q_{n-1})\leq \tfrac{(K+1)K+1}{K}(Kq_{n-1}+q_{n-2}),$$
we obtain that
$$\frac{\diam_K(\vec{b})}{\diam_K(\vec{b}')} > \frac{K}{(K(K+1)+1)(K+2)}.$$

The estimate follows.
\end{proof}

\begin{proof}[Proof of Lemma~\ref{lem:growth:CKQ:and:GKQ}]
Recall that the graph $G_{K,Q}$ has two kind of edges: prolongation edges
and shift edges. The shift edges are in bijection with
$C_{K,Q} \times C_{K,Q}$ which provides the term in $|C_{K,Q}|^2$.

Now we need to bound the number of prolongation edges and we claim
that there number is at most
\[
|C_{K,Q}| \, \log_2 \left( \frac{(K(K+1)+1)(K+2)}{K} Q \right).
\]
This follows from Lemma~\ref{lem:CF:diameter} and the lower bound estimates
on $\diam_K(\vec{b})$ from Lemma~\ref{lem:CKQ:low:bound:diameter}.
\end{proof}

It now remains to estimate the size of the sets $C_{K,Q}$. As we will see next,
the growth rate in $Q$ is intimately linked to the Hausdorff dimension of
the Lagrange and Markov spectra. To do so we apply the techniques in~\cite{Moreira-geom}
and Palis--Takens book~\cite{PaTa} to make this relation precise.

\begin{figure}[!ht]
\begin{center}\includegraphics{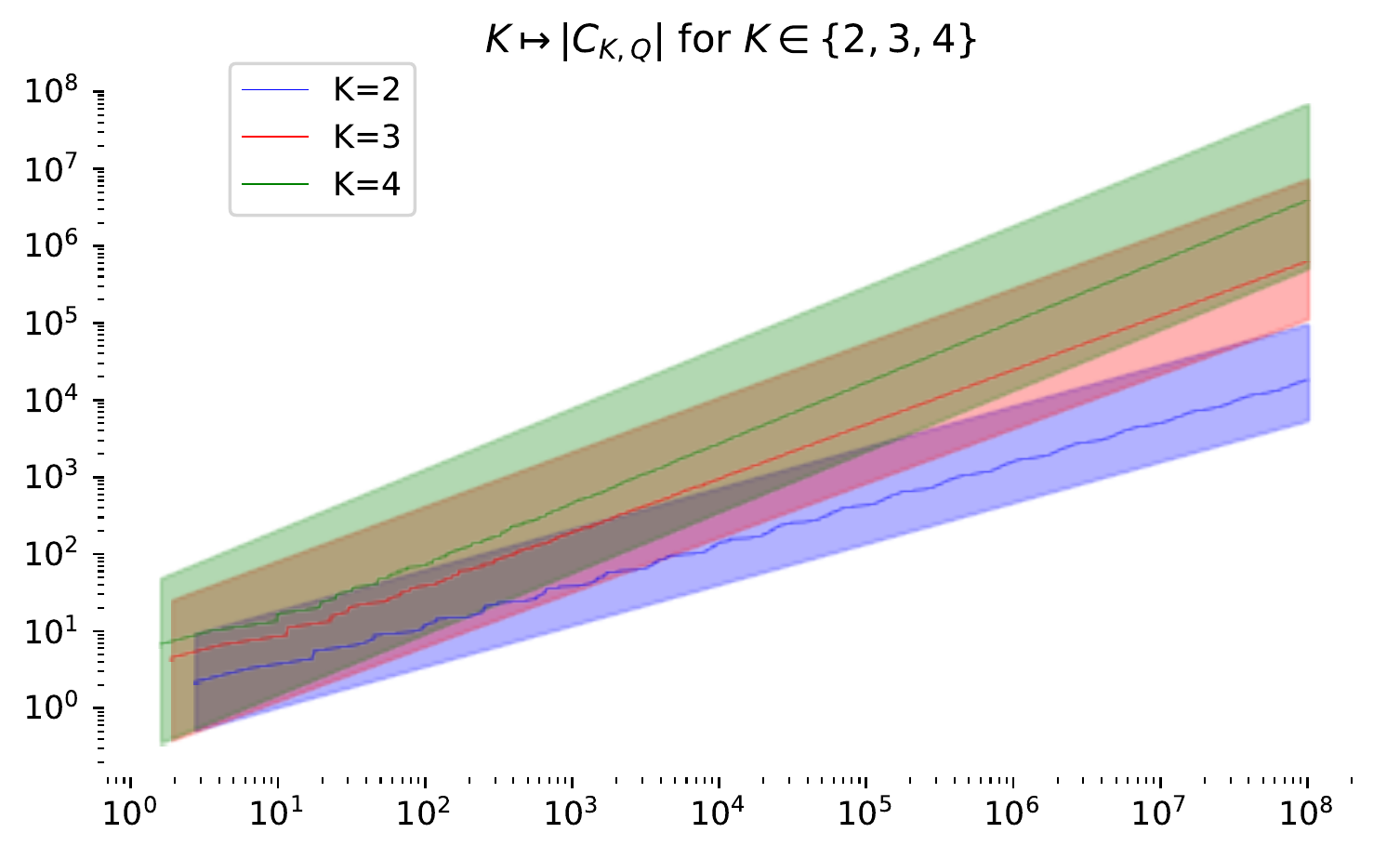}\end{center}
\caption{Sizes of the set $C_{K,Q}$ in log scale. The lighter color represents
the bound we obtain from Theorem~\ref{thm:control:growth:CKQ} together
with the estimates on $\HDim(E_K)$ (from~\cite{Je04,JePo01,JePo18}).}
\end{figure}

Let us recall that we defined the sets $E_K$ in Section~\ref{sec:graphs}
as the image of the one-sided shift $\Sigma^+_K$ under the continued
fraction map. More concretely
\[
E_K := \{[0; a_1, a_2, \ldots]: a_i \in \{1,\ldots,K\}
\subset [0,1].
\]
The set $E_K$ is a closed invariant subset of the Gauss map.

It is known that the Hausdorff dimension $\HDim(E_K)$ of $E_K$ is strictly
increasing in $K$ and $\HDim(E_K) =
1-\frac{6}{\pi^2}K-\frac{72}{\pi^4}\frac{\log K}{K^2}+O(1/K^2)\to 1$ as $K \to
\infty$ (cf. Hensley~\cite{H} and~\cite{He}). For our purposes, it is useful
to know that for small values of $K$ we have the following estimates
(from~\cite{Je04,JePo01,JePo18})
\begin{align*}
0.5312 &< \HDim(E_2) < 0.5313 \\
0.7056 &< \HDim(E_3) < 0.7057 \\
0.7889 &< \HDim(E_4) < 0.7890
\end{align*}

\begin{theorem}
\label{thm:control:growth:CKQ}
There exist constants $c_1(K)$ and $c_2(K)$ such that for any positive
integer $Q$ we have
\[
c_1(K) Q^{\HDim(E_K)} \leq \left| C_{K,Q} \right| \leq c_2(K) Q^{\HDim(E_K)}
\]
where the constants $c_1(K)$ and $c_2(K)$ can be explicitly computed
\[
\begin{array}{lll}
K & c_1(K) & c_2(K) \\
2 & 0.28 & 4.98 \\
3 & 0.23 & 14.85 \\
4 & 0.23 & 31.2 \\
\end{array}.
\]
\end{theorem}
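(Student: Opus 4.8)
The plan is to relate the counting function $|C_{K,Q}|$ to the Hausdorff dimension of $E_K$ via the geometry of the cylinder covers, following the thermodynamic/pressure formalism as in \cite{Moreira-geom} and \cite{PaTa}. The set $C_{K,Q}$ is precisely a collection of cylinders forming a \emph{cover} of $E_K$ with the stopping rule that each cylinder has diameter just below $1/Q$ while its parent has diameter just above $1/Q$. By Lemma~\ref{lem:CKQ:low:bound:diameter} together with~\eqref{eq:CKQ}, every $\vec{b} \in C_{K,Q}$ satisfies
\[
\frac{K}{(K(K+1)+1)(K+2)}\cdot\frac{1}{Q} \leq \diam_K(\vec{b}) \leq \frac{1}{Q},
\]
so all cylinders in $C_{K,Q}$ have comparable diameter, uniformly bounded above and below by constant multiples of $1/Q$. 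This is the crucial structural fact: $C_{K,Q}$ is an essentially uniform cover at scale $1/Q$.

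First I would record that the cylinders in $C_{K,Q}$ have pairwise disjoint interiors (they are leaves of the tree $T_{K,Q}$, hence no one is a prefix of another) and that they cover all of $E_K$. The standard bounded-distortion property of the Gauss map gives, for a dimension $s = \HDim(E_K)$, two-sided comparability between $\sum_{\vec{b} \in C_{K,Q}} \diam_K(\vec{b})^s$ and an invariant Gibbs measure $\mu$ of the partition, or alternatively one can invoke that $s$ is characterized by the pressure equation $P(-s \log |g'|) = 0$. The key is the sum estimate
\[
c' \leq \sum_{\vec{b} \in C_{K,Q}} \diam_K(\vec{b})^{s} \leq c''
\]
for constants $c', c''$ depending only on $K$ (uniform in $Q$): the lower bound comes from the cover reaching the Cantor set $E_K$ and a mass distribution (Frostman-type) argument, the upper bound from the $s$-dimensional packing/Hausdorff measure of $E_K$ being finite and positive together with bounded overlap. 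Because $\diam_K(\vec{b}) \asymp 1/Q$ uniformly, I can factor the sum as $\sum \diam_K(\vec{b})^s \asymp |C_{K,Q}| \cdot Q^{-s}$, and combining with the two-sided bound on the sum yields
\[
c_1(K)\, Q^{s} \leq |C_{K,Q}| \leq c_2(K)\, Q^{s},
\]
which is exactly the claim with $s = \HDim(E_K)$.

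To extract the \emph{explicit} constants $c_1(K), c_2(K)$ in the table, I would track the distortion and summation constants quantitatively. The uniform diameter bounds above give, with $s = \HDim(E_K)$,
\[
\left(\frac{K}{(K(K+1)+1)(K+2)}\right)^{s} Q^{-s} \cdot |C_{K,Q}|
\leq \sum_{\vec{b} \in C_{K,Q}} \diam_K(\vec{b})^{s}
\leq Q^{-s}\, |C_{K,Q}|,
\]
so that the explicit constants follow once one has explicit two-sided bounds on $\sum_{\vec{b}\in C_{K,Q}} \diam_K(\vec{b})^{s}$. These in turn come from the known numerical enclosures of $\HDim(E_K)$ (from~\cite{Je04,JePo01,JePo18}) fed into the distortion estimates coming from~\eqref{eq:diam_cylinder}; the factor $(\alpha^+_K - \alpha^-_K)$ and the denominators involving $q_n, q_{n-1}$ can be bounded using the restriction of partial quotients to $\{1,\dots,K\}$ and the values of $\alpha^\pm_K$ tabulated in Section~\ref{sec:background}.

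The main obstacle I anticipate is making the bounded-distortion constant genuinely explicit rather than merely finite: proving the sum $\sum \diam_K(\vec{b})^s$ is pinched between computable positive numbers requires careful control of the ratio between $\diam_K(\vec{b})$ and $\prod_k |g'|^{-1}$ along the orbit, since naive distortion estimates for continued fractions tend to lose constants. The cleanest route is probably to exploit the \emph{self-similar} renewal structure: every $\vec{b} \in C_{K,Q}$ decomposes as a parent in $C_{K,Q'}$ times a bounded extension, giving an almost-multiplicative recursion for the generating sum $\sum_{\vec{b}} \diam_K(\vec{b})^s$ that converges to a fixed quantity, from which both the asymptotic exponent $s$ and the explicit enclosing constants can be read off. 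I expect the arguments of~\cite{Moreira-geom} and the quantitative dimension machinery of~\cite{PaTa} to supply exactly this renewal-type control, with the numerical values in the table obtained by substituting the interval enclosures for $\HDim(E_K)$.
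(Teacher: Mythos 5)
Your overall architecture matches the paper's: both proofs start from the fact that every $\vec{b} \in C_{K,Q}$ has $\diam_K(\vec{b})$ pinched between $\frac{K}{(K(K+1)+1)(K+2)}\cdot\frac{1}{Q}$ and $\frac{1}{Q}$ (Lemma~\ref{lem:CKQ:low:bound:diameter} plus the definition~\eqref{eq:CKQ}), and both convert a two-sided estimate on a dimension-weighted sum over $C_{K,Q}$ into the counting bound. But the decisive step --- showing that the relevant sum is pinched between \emph{explicit} constants, with $s = \HDim(E_K)$ --- is exactly where your proposal has a genuine gap. You propose to obtain $c' \leq \sum_{\vec{b}\in C_{K,Q}} \diam_K(\vec{b})^s \leq c''$ from positivity and finiteness of the $s$-dimensional Hausdorff measure of $E_K$ via Frostman/mass-distribution arguments. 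That yields \emph{existence} of $c', c''$, but, as you yourself concede, not explicit values: explicit two-sided bounds on $\mathcal{H}^s(E_K)$, or on Ahlfors-regularity constants for $E_K$, are not available in the literature and are genuinely hard to produce. Your fallback --- an ``almost-multiplicative renewal recursion'' for the generating sum --- is never carried out, and it is not what either of the references you invoke actually supplies, so the table of constants remains unproved in your argument.

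The paper closes precisely this gap with a sharper tool: the bracketing inequality from pages 68--70 of Palis--Takens~\cite{PaTa}. Writing $\Lambda_K(\vec{b})$ and $\lambda_K(\vec{b})$ for the maximal and minimal derivative of the $|\vec{b}|$-th iterate of the Gauss map on $I_K(\vec{b})$, one has
\[
\sum_{\vec{b}\in C_{K,Q}}\Lambda_K(\vec{b})^{-\HDim(E_K)}\ \leq\ 1\ \leq\ \sum_{\vec{b}\in C_{K,Q}}\lambda_K(\vec{b})^{-\HDim(E_K)},
\]
and the crucial feature is that the middle constant is exactly $1$, so no unknown measure-theoretic quantity ever enters. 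The link to diameters is the mean value theorem: the $|\vec{b}|$-th iterate maps $I_K(\vec{b})$ onto $[\alpha_K^-,\alpha_K^+]$, so its average derivative equals $(\alpha_K^+-\alpha_K^-)/\diam_K(\vec{b})$; combining this with the \emph{explicit} distortion bound $\leq 4$ for iterates of the Gauss map (Proposition 2 of~\cite{Moreira-geom}) gives
\[
\Lambda_K(\vec{b}) \leq 4(\alpha_K^+-\alpha_K^-)\tfrac{(K(K+1)+1)(K+2)}{K}\,Q
\qquad \text{and} \qquad
\lambda_K(\vec{b}) \geq \tfrac{1}{4}(\alpha_K^+-\alpha_K^-)\,Q .
\]
Substituting these into the bracketing inequality immediately yields
\[
\left(\tfrac{1}{4}(\alpha_K^+-\alpha_K^-)\right)^{\HDim(E_K)} Q^{\HDim(E_K)}
\leq |C_{K,Q}| \leq
\left(4(\alpha_K^+-\alpha_K^-)\tfrac{(K(K+1)+1)(K+2)}{K}\right)^{\HDim(E_K)} Q^{\HDim(E_K)},
\]
and the numerical table follows from the enclosures of $\HDim(E_K)$ and the values of $\alpha_K^{\pm}$ computed in Section~\ref{sec:background}. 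So the repair is concrete: replace your Hausdorff-measure/Frostman step by the Palis--Takens inequality together with the explicit distortion constant $4$; with that substitution, the rest of your argument goes through essentially as you wrote it.
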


Putting together the estimates from Lemma~\ref{lem:growth:CKQ:and:GKQ} and
Theorem~\ref{thm:control:growth:CKQ} we obtain an estimate on the size.
\begin{corollary}
\label{cor:control:growth:GKQ}
We have $|G_{K,Q}| = O(Q^{2\, \HDim(E_K)})$ for $K=2, 3, 4$.
\end{corollary}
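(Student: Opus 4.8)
The plan is to simply combine the edge-count bound of Lemma~\ref{lem:growth:CKQ:and:GKQ} with the two-sided growth estimate of Theorem~\ref{thm:control:growth:CKQ}; the corollary is essentially immediate once one verifies that the logarithmic cross-term is of lower order than the polynomial main term.

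First I would record that, by the upper bound in Theorem~\ref{thm:control:growth:CKQ}, for each $K \in \{2,3,4\}$ we have $|C_{K,Q}| \leq c_2(K)\, Q^{\HDim(E_K)}$, so in particular $|C_{K,Q}| = O(Q^{\HDim(E_K)})$. Substituting this into the bound of Lemma~\ref{lem:growth:CKQ:and:GKQ}, the number of edges of $G_{K,Q}$ is at most
\[
|C_{K,Q}|^2 + |C_{K,Q}| \cdot \log_2\!\left( \frac{(K(K+1)+1)(K+2)}{K}\, Q \right).
\]
The first summand is $O(Q^{2\HDim(E_K)})$ directly from the upper bound above.

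Next I would argue that the second summand is negligible compared to the first. Since $K$ ranges over the fixed finite set $\{2,3,4\}$, the quantity $\frac{(K(K+1)+1)(K+2)}{K}$ is bounded, so the logarithmic factor is $O(\log Q)$. Crucially, for $K \geq 2$ the estimates recalled just before Theorem~\ref{thm:control:growth:CKQ} give $\HDim(E_K) \geq \HDim(E_2) > 0.53 > 0$, whence $\log Q = o\!\left(Q^{\HDim(E_K)}\right)$. Therefore the cross-term satisfies $|C_{K,Q}| \cdot \log Q = O\!\left(Q^{\HDim(E_K)} \log Q\right) = o\!\left(Q^{2\HDim(E_K)}\right)$ and is absorbed into the first summand. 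This yields the bound $O(Q^{2\HDim(E_K)})$ on the number of edges, and since the number of vertices of $G_{K,Q}$ is, up to the constant factor $K$, no larger than the number of shift edges, the full size $|G_{K,Q}|$ obeys the same bound.

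The only point requiring care --- and the closest thing to an obstacle --- is the strict positivity of $\HDim(E_K)$, which is precisely what guarantees that the logarithmic term cannot compete with the polynomial main term; this positivity is supplied by the explicit lower bounds quoted from~\cite{Je04,JePo01,JePo18}. Everything else is a routine substitution, the finitely many admissible values of $K$ allowing all $K$-dependent prefactors to be swallowed by the implied constant in the $O$-notation.
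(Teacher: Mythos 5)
Your proof is correct and follows exactly the route the paper intends: the corollary is stated right after the sentence ``Putting together the estimates from Lemma~\ref{lem:growth:CKQ:and:GKQ} and Theorem~\ref{thm:control:growth:CKQ} we obtain an estimate on the size,'' and your substitution of the upper bound $|C_{K,Q}| \leq c_2(K) Q^{\HDim(E_K)}$ into the edge-count bound, together with the observation that $\HDim(E_K) > 0$ makes the logarithmic cross-term lower order, is precisely the omitted routine verification. Your additional remark on the vertex count is a sensible (and correct) completion of a detail the paper glosses over entirely.
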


\begin{proof}[Proof of Theorem~\ref{thm:control:growth:CKQ}]
Observe that the $|\vec{b}|$-th iterate of the Gauss map sends $I_K(\vec{b})$ to $[\alpha_K^-, \alpha_K^+]$. Thus, the average of its derivative $(\alpha_K^+-\alpha_K^-)/\diam_K(\vec{b})$ belong to the interval
$$(\alpha_K^+-\alpha_K^-)Q \leq \frac{(\alpha_K^+-\alpha_K^-)}{\diam_K(\vec{b})} \leq (\alpha_K^+-\alpha_K^-) \frac{(K(K+1)+1)(K+2)}{K} Q.$$
Here we used the lower bound estimate from Lemma~\ref{lem:CKQ:low:bound:diameter}.

Since the distortion of the iterates of the Gauss map (i.e., the ratio between its maximal and minimal derivatives) is $\leq 4$ (see, e.g., Proposition 2 in \cite{Moreira-geom}), the maximal derivative $\Lambda_K(\vec{b})$ of the $|\vec{b}|$-th iterate of the Gauss map on  $I_K(\vec{b})$ is $\leq 4(\alpha_K^+-\alpha_K^-) \frac{(K(K+1)+1)(K+2)}{K} Q$ and the minimal derivative $\lambda_K(\vec{b})$ of the $|\vec{b}|$-th iterate of the Gauss map on  $I_K(\vec{b})$ is $\geq \frac{1}{4}(\alpha_K^+-\alpha_K^-)Q$.

As it is explained in pages 68 to 70 of Palis--Takens book \cite{PaTa}, one has
$$\sum\limits_{\vec{b}\in C_{K,Q}}\Lambda_K(\vec{b})^{-\HDim(E_K)}\leq 1\leq \sum\limits_{\vec{b}\in C_{K,Q}}\lambda_K(\vec{b})^{-\HDim(E_K)}.$$ In particular, it follows that
$$c_1(K):=\left(\frac{1}{4}(\alpha_K^+-\alpha_K^-)\right)^{\HDim(E_K)}\leq \frac{|C_{K,Q}|}{Q^{\HDim(E_K)}}\leq \left(4(\alpha_K^+-\alpha_K^-) \frac{(K(K+1)+1)(K+2)}{K}\right)^{\HDim(E_K)}:=c_2(K).$$
This completes the proof of the desired theorem.
\end{proof}

\section{Approximation by periodic orbits}\label{sec:bad-algo}
In this section we consider the following
question. How well the periodic (resp. ultimately periodic) sequences in $\Sigma_K$ with period at most $N$
approximate the set $\LagrangeSpec_K$ (resp. $\MarkovSpec_K$)?

\begin{proposition}\label{p.L}
Let $K \in \{2,3,4\}$ and $N\in\mathbb{N}$. Then the subset
\[
\bigcup\limits_{k=1}^{K^{2N+1}}\{\Lagrange(\overline{u}): u \in \{1,\ldots,K\}^{k}\}
\]
is $1/2^{N-2}$-dense in $\LagrangeSpec_K$. Similarly, the subset
\[
\bigcup\limits_{k=1}^{2\cdot 4^{2N+1}-1}\{\Markov(\overline{p}\,u\,\overline{s}): pus \in \{1, 2, 3,4\}^{k}\}
\]
is $1/2^{N-2}$-dense in $\MarkovSpec_4$.
\end{proposition}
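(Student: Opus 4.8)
The plan is to exploit the fact, already implicit in the construction of the cylinders $C_{K,Q}$, that the height $\lambda_0$ is essentially local. By Lemma~\ref{lem:CF:diameter}, two points of $\Sigma_K$ whose coordinates agree on the central window $(a_{-N},\dots,a_N)$ have $\lambda_0$-values differing by less than $2\cdot 2^{-(N-1)}=2^{-(N-2)}$, since each of the one-sided continued fractions $[0;a_1,a_2,\dots]$ and $[0;a_{-1},a_{-2},\dots]$ is then pinned down up to $2^{-(N-1)}$. Thus to every word $w$ of length $2N+1$ I attach the interval $[\ell(w),r(w)]$ of values taken by $\lambda_0$ on the cylinder $w$, of length $r(w)-\ell(w)<2^{-(N-2)}$. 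This single estimate furnishes both the lower and the upper bounds in the two density statements.

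For the Lagrange part I would first reduce to periodic sequences: since $\LagrangeSpec_K$ is the closure of $\{\Lagrange(\overline v)\}$ and the proposed finite set is contained in $\LagrangeSpec_K$, it suffices to $2^{-(N-2)}$-approximate $\Lagrange(\overline v)$ for every periodic $\ua=\overline v$, where $\Lagrange(\overline v)=\max_i\lambda_0(\sigma^i\overline v)$. If the period $\ell=|v|$ satisfies $\ell\le K^{2N+1}$ then $\overline v$ itself belongs to the family. If $\ell>K^{2N+1}$, let $m$ realize the maximum and let $w$ be its central window of length $2N+1$. Since there are only $K^{2N+1}$ possible windows, once the period exceeds this number two positions carry the same window; splicing out the segment between them (the two matching windows make the gluing seamless, so every window of the new orbit is still a genuine window of $\overline v$) strictly shortens the period while keeping all values inside the intervals they already occupied. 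Performing such reductions only on segments disjoint from the protected window at $m$, I bring the period down to size comparable to $K^{2N+1}$ while retaining the occurrence of $w$. The resulting short orbit $\overline u$ then satisfies $\Lagrange(\overline u)\ge \ell(w)>\Lagrange(\overline v)-2^{-(N-2)}$ (from the surviving $w$, using $\Lagrange(\overline v)\le r(w)$) and $\Lagrange(\overline u)<\Lagrange(\overline v)+2^{-(N-2)}$, since every window $w'$ of $\overline u$ occurs in $\overline v$, whence $r(w')<\Lagrange(\overline v)+2^{-(N-2)}$.

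For the Markov part the single cycle is replaced by a path joining two cycles, which is in fact easier. By compactness the supremum $\Markov(\ua)=\sup_n\lambda_0(\sigma^n\ua)$ is attained by a limit of shifts, so there is a window $w$ occurring in $\ua$ with $\Markov(\ua)\in[\ell(w),r(w)]$; fix an occurrence at position $m$. Applying the pigeonhole principle to the $4^{2N+1}+1$ windows $(W_m,\dots,W_{m+4^{2N+1}})$ to the right of $m$, and symmetrically to the left, I obtain a repeated window on each side within distance $4^{2N+1}$ of $m$; the two matched blocks define the periodic tails $\overline s$ and $\overline p$, and the piece between them containing $m$ is the middle $u$. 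The total length $|pus|$ is bounded by the sum of the two periods plus the middle, i.e. by $2\cdot 4^{2N+1}-1$. The ultimately periodic sequence $\overline p\,u\,\overline s$ agrees with $\ua$ on a neighbourhood of $m$, so its value at $m$ exceeds $\ell(w)>\Markov(\ua)-2^{-(N-2)}$, while each of its windows genuinely occurs in $\ua$ and hence has value $\le \Markov(\ua)+2^{-(N-2)}$; therefore $|\Markov(\overline p\,u\,\overline s)-\Markov(\ua)|<2^{-(N-2)}$.

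The step I expect to be delicate is the Lagrange reduction: unlike the Markov case, a single cycle must itself pass through a window realizing (up to $2^{-(N-2)}$) the maximal value, and one must check that the splicing surgery can always shorten the period without ever deleting the distinguished occurrence of $w$ — in particular that a removable repeated-window segment disjoint from $w$'s window exists while the period is still large. The bookkeeping of this surgery, and the precise accounting that yields the clean bounds $K^{2N+1}$ and $2\cdot 4^{2N+1}-1$ rather than these plus a lower-order $O(N)$ term, is the only genuinely technical point; the quantitative heart is entirely contained in the local estimate $r(w)-\ell(w)<2^{-(N-2)}$ coming from Lemma~\ref{lem:CF:diameter}.
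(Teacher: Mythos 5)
Your Markov half is essentially the paper's own argument: normalize so that the supremum is realized (as a limit) at a distinguished position, pigeonhole the length-$(2N+1)$ windows within distance $4^{2N+1}$ on each side, splice along the matched windows, and use the seamless-gluing property to compare $\Markov(\overline{p}\,u\,\overline{s})$ with $\Markov(\ua)$; your accounting of the total length matches the paper's $v+j-1\le 2\cdot 4^{2N+1}-1$. No issue there.

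The Lagrange half contains the genuine gap, and it is exactly the point you flagged. Two remarks. (i) Your reduction to periodic sequences uses that $\LagrangeSpec_K$ is the closure of the periodic values \emph{within the alphabet} $\{1,\dots,K\}$; the paper cites this closure property only for the full spectrum $\LagrangeSpec$, and its own proof is designed to avoid it: it takes an arbitrary $\ua\in\Sigma_K$ and handles the $\limsup$-versus-$\sup$ difficulty (windows of $\ua$ lying between two occurrences of the distinguished window $S$ may have values far above $t=\Lagrange(\ua)$) through the auxiliary subshift $\Sigma_{t+1/m}$ of sequences with Markov value at most $t+1/m$, whose length-$(2N+1)$ factors form the vertex set $B$ of the finite-type approximation $\widetilde\Sigma_{t+1/m,N}$. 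You solve the same difficulty by retreating to periodic orbits, where $\Lagrange=\Markov$; that is legitimate, but it makes your proof rest on an external theorem quoted in a stronger form than the paper states. (ii) More importantly, the surgery as you describe it does not give the stated bound. If the removed repeated-window segment must be disjoint from the whole protected block of positions $[m,m+2N]$, then pigeonhole among the remaining $\ell-(2N+1)$ positions only works while $\ell-(2N+1)>K^{2N+1}$, so the shortening process stalls at period $\le K^{2N+1}+2N+1$ --- precisely the $O(N)$ loss you worried about, and the proposition claims the clean bound. The fix, which is what the paper's formulation buys automatically, is to view one period of $\overline v$ as a closed walk on the de Bruijn-type graph whose vertices are the length-$(2N+1)$ windows, and to protect only the single visit at position $m$, not its $(2N+1)$-letter span: if a window repeats at positions $p<q$, the closed walk splits into two closed sub-walks joined at that vertex, the protected visit lies in one of them, and you keep that one. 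Seamless gluing guarantees both that all windows of the shortened orbit still occur in $\overline v$ and that the window read at position $m$ is unchanged even when the splice point falls within $2N$ letters of $m$. Iterating until no vertex repeats yields a cycle through $w$ of length at most the number of vertices, i.e. $\le K^{2N+1}$, with no error term. In the paper this is the single sentence asserting that the minimal number of transitions connecting $S$ to itself in $\widetilde\Sigma_{t+1/m,N}$ is trivially bounded by $|B|\le K^{2N+1}$.
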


\begin{proof}
The main ingredient in our estimates is Lemma~\ref{lem:CF:diameter}.

\textbf{Lagrange spectrum.}
Fix $K \in\{2, 3, 4\}$. Let $\ua=(a_n)_{n\in\ZZ}\in \Sigma_K$ and put $t=\Lagrange(\ua)\in L$.

By definition, $t ={\limsup}_{j\to\infty}(\alpha_j + \beta_j)$. Hence, given $\delta >0$, there are infinitely many $n \in \NN$ such that $|\alpha_n + \beta_n-t| < \delta$. Also, there is a sequence $h_m\to+\infty$ as $m\to\infty$ such that $\alpha_j + \beta_j<t+1/m$ for all $j\geq h_m$. Fix $N\in\mathbb{N}$. Given an index $j$ consider the finite sequence with $2N+1$ terms $(a_{j-N},\dots, a_j,\dots, a_{j+N}) =: S(j)$. There is a
sequence $S$ such that $S(j)=S$ for infinitely many values of $j$, i.e., there are $j_1 < j_2 <\dots$
with $S(j_i) = S$, $\forall\,i \ge 1$. Note that we may (and do) assume that $\lim_{i\to\infty} (\alpha_{j_i} + \beta_{j_i})=t$.

For each $m\in\mathbb{N}$, consider the subshift of $\Sigma $ given by $\Sigma_{t+1/m}=\{\underline\lambda\in\Sigma \mid m(\underline\lambda)\le t+1/m\}$. Note that $\Sigma_{t+1/m}$ is invariant by transposition operation $(\lambda_n)_{n\in\ZZ}\mapsto (\lambda_{-n})_{n\in\ZZ}$ and by the shift map $\sigma((\lambda_n)_{n\in\ZZ}):=(\lambda_{n+1})_{n\in\ZZ}$. Moreover, it is contained (and well approximated) by the following subshift of finite type: let $B$ be the set of all factors of size $2N+1$ of all elements of $\Sigma_{t+1/m}$, and denote by $\widetilde\Sigma_{t+1/m, N}$ the set of all infinite words in $\Sigma$ whose factors of size $2N+1$ are all in $B$.

We can describe $\widetilde\Sigma_{t+1/m,N}$ as a (not necessarily transitive) Markov shift: the allowed transitions are of the type $(c_0, c_1, c_2,\dots, c_{2N}, c_{2N+1})$ with $(c_0,c_1,\dots,c_{2N})$ and $(c_1,c_2,\dots,c_{2N+1})$ belonging to $B$.

By definition, we have $S\in B$, and it is possible to connect $S$ to itself in $\widetilde\Sigma_{t+1/m,N}$ by a sequence of allowed transitions. The minimum number $k$ of transitions needed to connect $S$ to itself in $\widetilde\Sigma_{t+1/m,N}$ is trivially bounded by the size of $B$, which is at most $K^{2N+1}$.

In other words, for each $m\in\mathbb{N}$, there is $1\leq k\le K^{2N+1}$ and a factor $\omega_m=(\tilde{a}_0,\tilde{a}_1,\dots,\tilde{a}_{2N+k})$ of size $2N+k+1$ of an element of $\widetilde\Sigma_{t+1/m,N}$ with $(\tilde{a}_0,\dots,\tilde{a}_{2N})=(\tilde{a}_k,\dots,\tilde{a}_{2N+k})=S$.

In particular, if $\tilde\theta_m=\overline{(\tilde{a}_0,\dots,\tilde{a}_{k-1})}$ is the periodic sequence of period $\tilde{a}_0\dots \tilde{a}_{k-1}$, then Lemma~\ref{lem:CF:diameter} ensures that $|\Lagrange(\tilde\theta_m)-t|<1/m+2/2^{N-1}$. Since $k\le K^{2N+1}$, there is a periodic sequence $\overline{\mu}$ with period $\leq K^{2N+1}$ such that $\overline{\mu}=\tilde\theta_m$ for infinitely many $m\in\NN$ and, a fortiori, $|\Lagrange(\overline{\mu})-t|\leq 2/2^{N-1}$.

\textbf{Markov spectrum.} Let $\underline\theta=(b_n)_{n\in\ZZ}\in \Sigma$, consider $s=m(\underline{\theta})\in L$, and suppose that $b_n\leq 4$ for all $n\in\ZZ$. We may (and do) also assume that $s=\alpha_0+\beta_0$.

By the pigeonhole principle, there are $1\le i<j\le 4^{2N+1}$ and $1\le u<v\le 4^{2N+1}$ such that $(b_i,b_{i+1},\dots,b_{i+2N})=(b_j,b_{j+1},\dots,b_{j+2N})$ and $(b_{-u},b_{-u-1},\dots,b_{-u-2N})=(b_{-v},b_{-v-1},\dots,b_{-v-2N})$.

By Lemma~\ref{lem:CF:diameter}, if $\hat\theta$ is the doubly pre-periodic sequence
$$\overline{(b_{-v+1},b_{-v+2}\dots,b_{-u-1}b_{-u}})b_{-u+1}\dots b_{i-1}\overline{(b_i,b_{i+1}\dots,b_{j-1})},$$
then $|m(\hat\theta)-s|<2/2^{N-1}$. Notice that the total size of the block formed by the central block and the periods on both sides is at most $2\cdot 4^{2N+1}-1$.
\end{proof}

\begin{remark} The Markov spectrum $\MarkovSpec$ is also characterized by the values of real indefinite binary quadratic  forms (see \cite[Chapter 1]{CF}). An attempt to draw some portions of $\MarkovSpec$ using certain binary quadratic forms of bounded heights was performed by T. Morrison~\cite{Mo12}, but unfortunately this text does not discuss the quality of the approximation of $\MarkovSpec$ obtained by this method.
\end{remark}

\newpage
\section{High resolution Lagrange spectra $\LagrangeSpec_2$ and $\LagrangeSpec_3$}\label{sec:final-pictures}
\begin{figure}[!ht]
\includegraphics[angle=90]{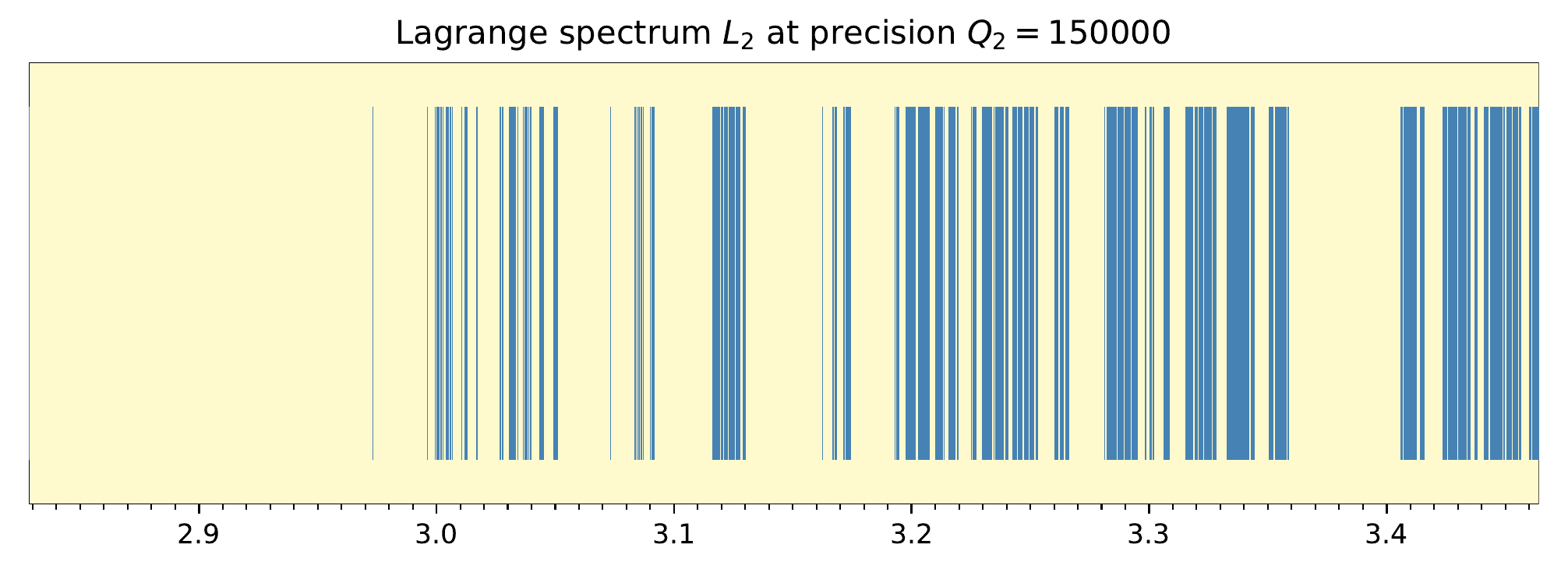}
\hspace{0.4cm}
\includegraphics[angle=90]{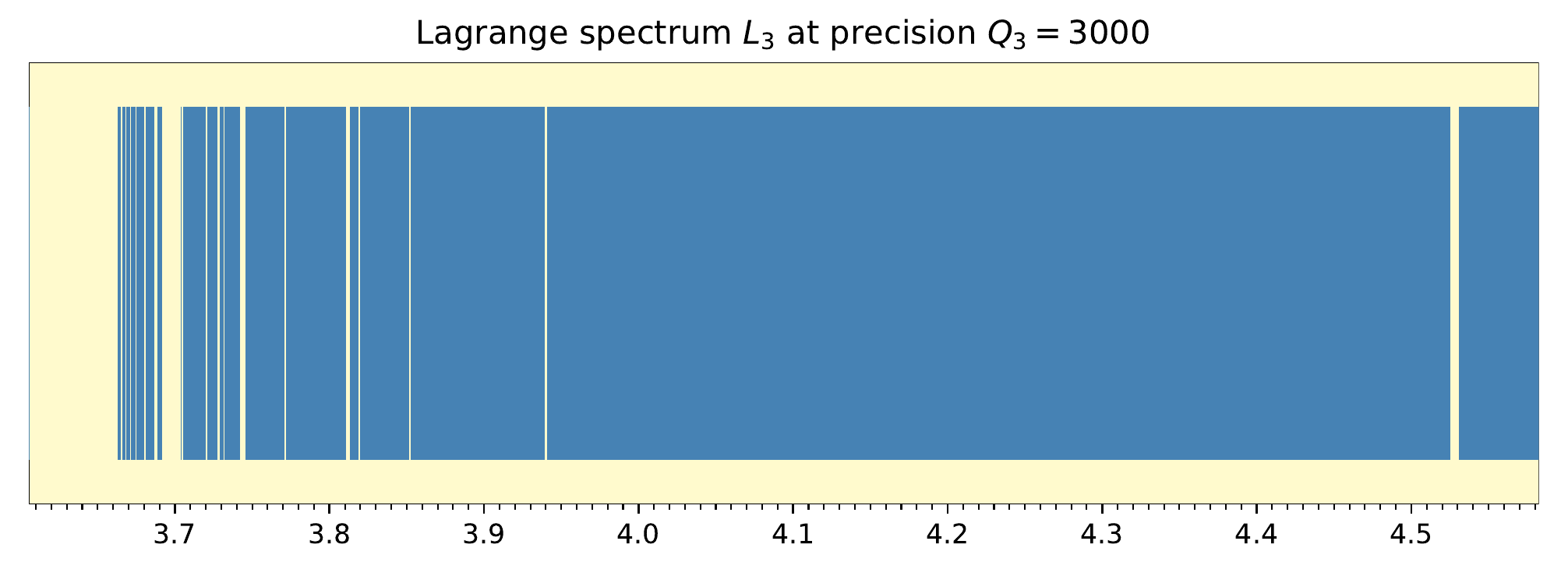}
\caption{Pictures of Lagrange spectra $\LagrangeSpec_2$ and $\LagrangeSpec_3$ obtained from our algorithm.
The parameters $Q_2$ and $Q_3$ are so that the Lagrange spectra $\LagrangeSpec_2$ and $\LagrangeSpec_3$
are respectively at most at Hausdorff distance $1/Q_2$ and $1/Q_3$ from the union
of blue intervals.}
\label{fig:lagrange:spectra}
\end{figure}

\end{document}